\theoremstyle{plain}
\newtheorem{lem}{Lemma}[section]
\newtheorem{cor}[lem]{Corollary}
\newtheorem{prop}[lem]{Proposition}
\newtheorem{thm}[lem]{Theorem}
\newtheorem*{thmm}{Theorem}
\theoremstyle{definition}
\newtheorem{ex}[lem]{Example}
\newtheorem{rem}[lem]{Remark}
\newtheorem{dfn}[lem]{Definition}
\renewcommand{\phi}{\varphi}
\renewcommand{\leq}{\leqslant}
\renewcommand{\geq}{\geqslant}
\renewcommand{\epsilon}{\varepsilon}
\renewcommand{\kappa}{\varkappa}
\DeclareMathOperator{\spec}{Spec}
 \DeclareMathOperator{\charr}{char}
 \DeclareMathOperator{\Map}{Map}
\DeclareMathOperator{\Hom}{Hom} 
 \DeclareMathOperator{\id}{id}
 \DeclareMathOperator{\colim}{colim}
 \DeclareMathOperator{\kr}{Ker}
 \DeclareMathOperator{\nis}{nis}
 \DeclareMathOperator{\Ob}{Ob}
\newcommand{\lra}[1]{\bl{#1}\longrightarrow\relax}
\newcommand{\bl}[1]{\buildrel #1\over}
\newcommand{\cc}{\mathcal}
\newcommand{\bb}{\mathbb}
\newcommand{\op}{{\textrm{\rm op}}}
\newcommand{\wh}{\widehat}
\newcommand{\wt}{\widetilde}
\newcommand{\gmp}{\bb G_m^{\wedge 1}}
\newcommand{\gmpn}{\bb G_m^{\wedge n}}
\newcommand{\uhom}{\underline{\Hom}}
\begin{document}

\footskip30pt


\title{Group schemes and motivic spectra}
\author{Grigory Garkusha}
\address{Department of Mathematics, Swansea University, Fabian Way, Swansea SA1 8EN, UK}
\email{g.garkusha@swansea.ac.uk}

\thanks{Supported by EPSRC grant EP/W012030/1}


\begin{abstract}
By a theorem of Mandell, May, Schwede and Shipley~\cite{MMSS} the stable
homotopy theory of classical $S^1$-spectra is recovered from orthogonal spectra.
In this paper general linear, special linear, symplectic, orthogonal and special
orthogonal motivic spectra are introduced and studied. It is shown that
stable homotopy theory of motivic spectra is recovered from each of these types of spectra.
An application is given for
the localization functor $C_*\cc Fr:SH_{\nis}(k)\to SH_{\nis}(k)$
in the sense of~\cite{GP5} that converts Morel--Voevodsky stable motivic homotopy theory $SH(k)$ into
the equivalent local theory of framed bispectra~\cite{GP5}.
\end{abstract}

\keywords{Group schemes, stable motivic homotopy theory, framed motives}

\subjclass[2010]{14F42, 55P42}

\maketitle

\thispagestyle{empty} \pagestyle{plain}

\newdir{ >}{{}*!/-6pt/@{>}}


\section{Introduction}

In the 90's several approaches to the stable homotopy theory of $S^1$-spectra
were suggested. In~\cite{MMSS} several comparison theorems relating the different constructions
were proven showing that all of the known approaches to highly structured ring
and module spectra are essentially equivalent.

Mandell, May, Schwede and Shipley~\cite{MMSS} proved that the stable
homotopy theory of classical topological $S^1$-spectra is recovered from orthogonal spectra.
In~\cite{Ost} {\O}stv{\ae}r conjectured that the stable
homotopy theory of motivic spectra can be recovered from motivic GL-spectra, in which the role
of the orthogonal groups as in topology~\cite{MMSS} is played by the general linear group
schemes $GL_n$-s. In this paper this conjecture is solved in the affirmative.

We follow~\cite{MMSS} to develop the formal theory of diagram motivic spectra
in Section~\ref{tutu}. The framework allows lots of flexibility
so that the reader can construct further interesting examples. For our purposes
we work with diagram motivic spectra coming from group schemes $GL_n$-s,
$SL_n$-s, $Sp_n$-s, $O_n$-s and $SO_n$-s (see Section~\ref{motspgrsch}).
These group schemes act on motivic spheres.
We also refer to the associated
motivic spectra as general linear, special linear, symplectic, orthogonal and special
orthogonal motivic spectra or just GL-, SL-, Sp-, O-, SO-motivic spectra.

One of the tricky concepts in the stable homotopy
theory of classical symmetric spectra is that of semistability. Semistable symmetric
spectra are important for understanding the difference between stable equivalences and maps
inducing $\pi_*$-isomorphisms, that is, isomorphisms of the classical stable homotopy groups
(in contrast with most other categories of spectra, not all stable
equivalences of symmetric spectra induce $\pi_*$-isomorphisms). The
same concept of semistability occurs in the stable homotopy theory
of motivic spectra. We show in Section~\ref{semsection} that every GL-, SL-
or Sp-motivic spectrum is semistable ragarded as a symmetric motivic spectrum.
This fact is the motivic counterpart of the classical result in topology
saying that every orthogonal $S^1$-spectrum of topological spaces
is semistable.

We then define in Section~\ref{modelstr} stable model structures on
the categories of diagram motivic spectra.
The main result of the paper is proven in Section~\ref{compthmsection}
which compares ordinary/symmetric motivic spectra with GL-, SL-,
Sp-, O- and SO-motivic spectra respectively (cf.
Mandell--May--Schwede--Shipley~\cite[0.1]{MMSS}).

\begin{thmm}[Comparison]
Let $k$ be any field.
The following natural adjunctions between categories of $T$- and
$T^2$-spectra are all Quillen equivalences with respect to the
stable model structure:
\begin{itemize}
\item[$(1)$]
   $Sp_{T}(k)\rightleftarrows Sp_{T}^{\textrm{GL}}(k)$;
\item[$(2)$]
   $Sp_{T^2}(k)\rightleftarrows Sp_{T^2}^{\textrm{SL}}(k)$;
\item[$(3)$]
   $Sp_{T^2}(k)\rightleftarrows Sp_{T^2}^{\textrm{Sp}}(k)$;
\item[$(4)$]
   $Sp_{T^2}(k)\rightleftarrows Sp_{T^2}^{\textrm{SO}}(k)\rightleftarrows Sp_{T^2}^{\textrm{O}}(k)$
if $\charr k\ne2$.
\end{itemize}
\end{thmm}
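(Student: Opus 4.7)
The plan is to model the argument on Mandell--May--Schwede--Shipley~\cite{MMSS}, transported into the motivic setting. In each of the four cases, let $L$ denote the left adjoint (prolongation along the evident inclusion of enriched indexing categories: trivial group $\hookrightarrow GL_n$ for (1), symmetric groups $\hookrightarrow SL_n$ for (2), etc.) and let $R$ be the forgetful right adjoint. First I would verify that $(L,R)$ is a Quillen adjunction using the model structures built in Section~\ref{modelstr}: the stable structure on each side is a Bousfield localization of a level model structure transferred from pointed motivic spaces, and $R$ preserves level fibrations and level weak equivalences by construction, so preservation survives the localization.

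Next I would reduce the Quillen equivalence statement to checking that the derived unit $X \to RLX$ is a stable equivalence for cofibrant $X$. Since stably fibrant objects on both sides are detected level-wise as motivic $\Omega$-spectra, $R$ reflects stable equivalences between stably fibrant objects, so the counit half follows from the unit half. By cofibrant generation it suffices to verify the unit on the generating free spectra $F_n K$, where $K$ is a finitely presented pointed motivic space and $n\geq 0$.

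The heart of the proof is the explicit identification of $RLF_n K$. The Kan extension formula expresses the $m$-th level of $LF_n K$ as a twisted smash product involving the homogeneous quotient $G_m/G_{m-n}$ (with $G_{m-n}=\{1\}$ when $m<n$), so the unit $F_n K \to RLF_n K$ comes from the basepoint inclusion $\{*\}\to G_m/G_{m-n}$ smashed with $K\wedge T^{m-n}$ (or $T^{2(m-n)}$ in cases (2)--(4)). Stability of the equivalence then reduces to showing, level-wise up to the stable structure, that $G_m/G_{m-n}\wedge_{?} T^{?}$ agrees motivically with the expected sphere; in the $GL$ case via $GL_m/GL_{m-1}\simeq \bb A^m\setminus 0$, and via the analogous quaternionic/orthogonal presentations in the $SL$, $Sp$, $O$, $SO$ cases. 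A telescoping/filtration argument reassembles the single-step computations into the full quotient $G_m/G_{m-n}$.

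The main obstacle I expect is precisely this motivic computation together with its compatibility under the transfer maps that come out of the Kan extension, since the filtration of $G_m/G_{m-n}$ is more delicate than in the topological case and must be controlled motivically over an arbitrary field. The hypothesis $\charr k\neq 2$ in (4) enters here through the short exact sequence $1\to SO_n\to O_n\to \bb Z/2\to 1$: passage between $SO$- and $O$-motivic spectra requires the quotient $\bb Z/2$ to be motivically well-behaved, which fails in characteristic two. Once these quotient computations are in place, the remaining verifications are formal, following the outline of~\cite[\S0--\S1]{MMSS}.
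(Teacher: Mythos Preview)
Your overall strategy matches the paper's: reduce to free spectra, identify $RLF_nK$ levelwise as an induced space involving the Stiefel quotients $G_m/G_{m-n}$, and show the unit is a stable equivalence. But the crucial computational step is mischaracterized, and this is where the real work lies.

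The quotient $G_m/G_{m-n}$ is \emph{not} motivically equivalent to a sphere for $n>1$ (just as the classical Stiefel manifold $O(m)/O(m-n)$ is not a sphere). What the paper actually proves---and what the MMSS argument you are modelling on also uses---is a \emph{connectivity} statement: the fibre sequences built from $GL_m/GL_{m-1}\simeq\bb A^m\setminus 0$ (and analogously $SL$, $Sp$, $O$, $SO$ via the odd quadrics $Q_{2m-1}$) show that $G_m/G_{m-n}$ is $\bb A^1$-$(m-n-1)$-connected. The cofibre of the unit map on $F_nK$ then has a layer filtration whose $i$th layer lies in $SH(k)_{\geq i-1}$, so the cofibre lies in $\bigcap_i SH(k)_{\geq i}=0$. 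The passage from unstable $\bb A^1$-connectivity of $G_m/G_{m-n}$ to connectivity of its suspension spectrum in $SH(k)$ is not formal: it requires Morel's stable $\bb A^1$-connectivity theorem (this is the content of Proposition~\ref{connect} and Corollary~\ref{connectcor}), which you do not mention but which is the genuine motivic input over an arbitrary field.

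Your explanation of the hypothesis $\charr k\neq 2$ in (4) is also off. It is not about the extension $1\to SO_n\to O_n\to\bb Z/2\to 1$; rather, the very definition of the split orthogonal groups $O_{2n}$, $SO_{2n}$ via the quadratic form $q_{2m}$, the smoothness of the homogeneous spaces, and the Zariski local triviality of the torsors $O_{2n+2k}\to O_{2n+2k}/O_{2n}$ (see Remark~\ref{remgh}) all require $\tfrac12\in k$. The $\charr k\neq 2$ restriction is thus already present for each of $Sp_{T^2}^{\textrm O}(k)$ and $Sp_{T^2}^{\textrm{SO}}(k)$ separately, not only for the comparison between them.
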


An application of the Comparison Theorem is given in
Section~\ref{locfun} for the localizing functor
   $$C_*\cc Fr:SH_{\nis}(k)\to SH_{\nis}(k)$$
in the sense of~\cite{GP5}. Recall that a new approach to the
classical Morel--Voevodsky stable homotopy theory $SH(k)$ was
suggested in~\cite{GP5} and is based on the functor $C_*\cc Fr$.
This approach has nothing to do with any kind of motivic
equivalences and is briefly defined as follows. We start with the
local stable homotopy category of sheaves of $S^1$-spectra
$SH_{S^1}^{\nis}(k)$. Then stabilizing $SH_{S^1}^{\nis}(k)$ with
respect to the endofunctor $\gmp\wedge-$, we arrive at the
triangulated category of bispectra $SH_{\nis}(k)$. We then apply an
{\it explicit\/} localization functor
   $$C_*\cc Fr:SH_{\nis}(k)\to SH_{\nis}(k)$$
that first takes a bispectrum $E$ to its naive projective cofibrant
resolution $E^c$ and then one sets in each bidegree $C_*\cc
Fr(E)_{i,j}:=C_*Fr(E^c_{i,j})$. The localization functor $C_*\cc Fr$
is isomorphic to the big framed motives localization functor $\cc
M^b_{fr}$ of~\cite{GP1} (see~\cite{GP5} as well). Then $SH^{new}(k)$
is defined as the category of $C_*\cc Fr$-local objects in
$SH_{\nis}(k)$. By~\cite[Section~2]{GP5} $SH^{new}(k)$ is
canonically equivalent to Morel--Voevodsky's $SH(k)$.

Using the Comparison Theorem above, we define new functors $C_*\cc
Fr^{\cc G,n}$ on $SH_{\nis}(k)$ that depend on $n\geq 0$ and the
choice of the family of groups $\cc G=\{GL_k\}_{k\geq
0},\{SL_{2k}\}_{k\geq 0},\{Sp_{2k}\}_{k\geq 0}$, $\{O_{2k}\}_{k\geq 0}$,
$\{SO_{2k}\}_{k\geq 0}$. In Theorem~\ref{prilozh} we prove that $C_*\cc
Fr$ and $C_*\cc Fr^{\cc G,n}$ are naturally isomorphic. As a result,
one can incorporate linear algebraic groups into the theory of
motivic infinite loop spaces and framed motives developed
in~\cite{GP1}.

Throughout the paper we denote by $S$ a Noetherian scheme of finite
dimension. We write $Sm/S$ for the category of smooth separated schemes of
finite type over $S$. $Sm/S$ comes equipped with the Nisnevich
topology~\cite[p.~95]{MV}. We denote by
$(Shv_\bullet(Sm/S),\wedge,pt_+)$ the closed symmetric monoidal
category of pointed Nisnevich sheaves on $Sm/S$. The category of
pointed motivic spaces $\bb M_\bullet$ is, by definition, the
category $\Delta^{\op}Shv_\bullet(Sm/k)$ of pointed simplicial Nisnevich
sheaves. Unless otherwise specified, we shall always deal with
the flasque local (respectively motivic) model structure on
$\bb M_\bullet$ in the sense of~\cite{Is}. Both model structures are
weakly finitely generated in the sense of~\cite{DRO}.

\subsubsection*{Acknowledgements}
The author is very grateful to Alexey Ananyevskiy, Semen Podkorytov and Matthias Wendt for
numerous helpful discussions. He also thanks Aravind Asok, Andrei Druzhinin
and Sergey Gorchinsky for various comments.

\section{Diagram motivic spaces and diagram motivic spectra}\label{tutu}

We refer the reader to~\cite{Bor} for basic facts of enriched category theory.
We mostly adhere to~\cite{MMSS} in this section.
Suppose $\cc C$ is a small category enriched over the closed
symmetric monoidal category of pointed motivic spaces $\bb
M_\bullet$. Following~\cite{MMSS} a {\it motivic $\cc C$-space\/} or
just a {\it $\cc C$-space\/} is an enriched functor $X:\cc C\to\bb
M_\bullet$. The category of motivic $\cc C$-spaces and $\bb
M_\bullet$-natural transformations between them is denoted by $[\cc
C,\bb M_\bullet]$. In the language of enriched category theory $[\cc
C,\bb M_\bullet]$ is the category of enriched functors from the $\bb
M_\bullet$-category $\cc C$ to the $\bb M_\bullet$-category $\bb
M_\bullet$. When $\cc C$ is enriched over unbased motivic spaces, we
implicitly adjoin a base object $*$; in other words, we then
understand $\cc C(a,b)$ to mean the union of the unbased motivic
space of maps from $a$ to $b$ in $\cc C$ and a disjoint basepoint.

\begin{dfn}\label{eval}
For an object $a\in\cc C$, define the {\it evaluation functor\/}
$Ev_a:[\cc C,\bb M_\bullet]\to\bb M_\bullet$ by $Ev_a(X)=X(a)$.
We also define the {\it shift desuspension functor\/} $F_a:\bb
M_\bullet\to[\cc C,\bb M_\bullet]$ by $F_a(A)=\cc C(a,-)\wedge A$
with $\cc C(a,-)$ the enriched functor represented by $a$. Then
$F_a$ is left adjoint to $Ev_a$.
\end{dfn}

For any $X\in[\cc C,\bb M_\bullet]$ there is a canonical isomorphism
   $$X\cong\int^{c\in\cc C}\cc C(c,-)\wedge X(c)=\int^{c\in\cc C}F_c(X(c)).$$
If $\cc C$ is a symmetric monoidal $\bb M_\bullet$-category with
monoidal product $\diamond$ and monoidal unit $u$, then $[\cc C,\bb M_\bullet]$ is a closed
symmetric monoidal $\bb M_\bullet$-category with monoidal product
   $$X\wedge Y=\int^{(a,b)\in\cc C\otimes\cc C}\cc C(a\diamond b,-)\wedge X(a)\wedge Y(b).$$
The monoidal unit is $\cc C(u,-)$. Moreover, $\cc C(a,-)\wedge\cc
C(b,-)\cong\cc C(a\diamond b,-)$. It follows that
   $$F_a(A)\wedge F_b(B)\cong F_{a\diamond b}(A\wedge B),\quad A,B\in\bb M_\bullet.$$

\begin{dfn}\label{spectra}
Suppose $(\cc C,\diamond,u)$ is a symmetric monoidal $\bb
M_\bullet$-category and $R$ is a ring object in $[\cc C,\bb
M_\bullet]$ with unit $\lambda$ and product $\phi$.
Following~\cite[1.9]{MMSS} a {\it $\cc C$-spectrum over $R$\/} is a
$\cc C$-space $X\in[\cc C,\bb M_\bullet]$ together with maps
$\sigma:X(a)\wedge R(b)\to X(a\diamond b)$, natural in $a$ and $b$,
such that the composite
   $$X(a)\cong X(a)\wedge S^0\xrightarrow{\id\wedge\lambda}X(a)\wedge R(u)\xrightarrow{\sigma}X(a\diamond u)\cong X(a),$$
where $S^0:=pt_+$,
is the identity and the following diagram commutes:
   $$\xymatrix{X(a)\wedge R(b)\wedge R(c)\ar[r]^{\sigma\wedge\id}\ar[d]_{\id\wedge\phi}&X(a\diamond b)\wedge R(c)\ar[d]^\sigma\\
                       X(a)\wedge R(b\diamond c)\ar[r]^\sigma&X(a\diamond b\diamond c).}$$
The category of $\cc C$-spectra over $R$ is denoted by $[\cc C,\bb
M_\bullet]_R$. It is tensored and cotensored over $\bb M_\bullet$.
\end{dfn}

The following lemma is straightforward.

\begin{lem}\label{monoid}
Suppose $\cc C$ is a symmetric monoidal $\bb M_\bullet$-category and $R$ is a ring
object in $[\cc C,\bb M_\bullet]$. Then the categories of (right) $R$-modules
and of $\cc C$-spectra over $R$ are isomorphic.
\end{lem}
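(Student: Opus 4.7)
The plan is to use the Day convolution formula together with the enriched Yoneda lemma to translate $R$-module data into $\cc C$-spectrum data, and vice versa. A right $R$-module structure on $X\in[\cc C,\bb M_\bullet]$ is a morphism $\mu:X\wedge R\to X$. Since by Day's theorem
$$X\wedge R=\int^{(a,b)\in\cc C\otimes\cc C}\cc C(a\diamond b,-)\wedge X(a)\wedge R(b),$$
such a $\mu$ is the same datum as a dinatural family of maps $\cc C(a\diamond b,-)\wedge X(a)\wedge R(b)\to X$, which by the enriched Yoneda lemma (applied for each pair $(a,b)$) is in bijection with a family of maps $\sigma_{a,b}:X(a)\wedge R(b)\to X(a\diamond b)$ natural in $a$ and $b$. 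This sets up the desired correspondence $\mu\leftrightarrow\sigma$ on the underlying structure.

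Next I would verify that the unit and associativity axioms match under this dictionary. The ring unit $\eta_R:\cc C(u,-)\to R$ is Yoneda-classified by $\lambda:S^0\to R(u)$, so the composite $X\cong X\wedge\cc C(u,-)\xrightarrow{\id\wedge\eta_R}X\wedge R\xrightarrow{\mu}X$ evaluates at $a$ to
$$X(a)\cong X(a)\wedge S^0\xrightarrow{\id\wedge\lambda}X(a)\wedge R(u)\xrightarrow{\sigma}X(a\diamond u)\cong X(a),$$
so that the module unit law becomes the unit axiom for $\sigma$. For associativity, expanding $X\wedge R\wedge R$ as an iterated coend over $(a,b,c)$ and again invoking Yoneda reduces both $\mu\circ(\mu\wedge\id_R)$ and $\mu\circ(\id_X\wedge\phi)$ to families of maps $X(a)\wedge R(b)\wedge R(c)\to X(a\diamond b\diamond c)$, which are then recognized as the two routes around the associativity square in Definition~\ref{spectra}. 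Finally, a morphism $f:X\to X'$ is $R$-linear iff it commutes with $\mu$, and by the same translation iff it commutes with all $\sigma_{a,b}$, giving a bijection on morphism sets and hence the claimed isomorphism of categories.

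The only substantive obstacle is bookkeeping: one must trace the enriched Yoneda isomorphism carefully through the coends and check that the dinaturality conditions on coend maps correspond exactly to the naturality conditions on the $\sigma_{a,b}$. No new ideas beyond the standard coend/Yoneda dictionary for Day convolution are needed, which is presumably why the author calls the lemma straightforward.
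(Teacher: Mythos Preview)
Your proposal is correct and is exactly the standard coend/Yoneda unpacking that the paper has in mind when it declares the lemma ``straightforward'' without further argument. There is nothing to compare: the paper gives no proof, and your outline supplies precisely the routine verification (Day convolution plus enriched Yoneda to identify $\mu:X\wedge R\to X$ with the family $\sigma_{a,b}$, then matching the unit and associativity diagrams) that justifies the omission.
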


A theorem of Day~\cite{Day} also implies the following

\begin{lem}\label{commonoid}
Let $\cc C$ be a symmetric monoidal $\bb M_\bullet$-category and $R$
a commutative ring object in $[\cc C,\bb M_\bullet]$. Then the
category of $R$-modules $[\cc C,\bb M_\bullet]_R$ has a smash
product $\wedge_R$ and internal Hom-functor $\uhom_R$ under which it
is a closed symmetric monoidal category with unit $R$.
\end{lem}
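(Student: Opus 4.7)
The plan is to invoke the standard principle that modules over a commutative monoid in a closed symmetric monoidal cocomplete category again form a closed symmetric monoidal category with the monoid as unit, and apply it to $\cc A=[\cc C,\bb M_\bullet]$. The closed symmetric monoidal structure on $\cc A$, with monoidal product the Day convolution $\wedge$ and unit $\cc C(u,-)$, was recalled before Definition~\ref{spectra}. By Lemma~\ref{monoid}, $[\cc C,\bb M_\bullet]_R$ is the category of right $R$-modules in $\cc A$, so the general recipe applies once we have enough colimits; and pointwise colimits in $\bb M_\bullet$ make $[\cc C,\bb M_\bullet]$ cocomplete.

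First I would define $M\wedge_R N$ as the reflexive coequalizer in $[\cc C,\bb M_\bullet]$
\[
M\wedge R\wedge N\rightrightarrows M\wedge N\longrightarrow M\wedge_R N,
\]
where one arrow uses the right $R$-action on $M$ and the other uses the symmetry of $\wedge$ together with the right $R$-action on $N$. Commutativity of $R$ ensures that the residual $R$-action on $M\wedge N$ descends to an $R$-module structure on the coequalizer, so $\wedge_R$ actually lands in $[\cc C,\bb M_\bullet]_R$. Dually, I would define $\uhom_R(M,N)$ as the equalizer
\[
\uhom_R(M,N)\longrightarrow\uhom(M,N)\rightrightarrows\uhom(M\wedge R,N),
\]
where $\uhom$ denotes the Day-convolution internal hom and the two maps encode precomposition with the $R$-action on $M$ and postcomposition with the $R$-action on $N$ (with the roles of source/target interchanged via commutativity of $R$).

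The required adjunction
\[
[\cc C,\bb M_\bullet]_R(M\wedge_R N,P)\cong [\cc C,\bb M_\bullet]_R(M,\uhom_R(N,P))
\]
then follows from the adjunction $(\wedge,\uhom)$ on $[\cc C,\bb M_\bullet]$ by extracting $R$-equivariant parts on both sides, using the universal properties of the coequalizer and equalizer above. Unitality ($R\wedge_R M\cong M$), associativity, and symmetry of $\wedge_R$ are then standard diagram chases: each of them reduces via the coequalizer presentation to the corresponding property of $\wedge$, where crucially one uses that $\wedge$ preserves colimits in each variable since it is a left adjoint. Commutativity of $R$ is what allows the symmetry on $\wedge$ to descend to $\wedge_R$.

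The main (mild) obstacle is a bookkeeping one, namely verifying that all the relevant coequalizers stay coequalizers after being smashed with further modules, so that iterated smash products can be rebracketed up to coherent isomorphism. This is taken care of precisely because $-\wedge -$ is a left adjoint on $[\cc C,\bb M_\bullet]$ and therefore preserves the reflexive coequalizers defining $\wedge_R$ in each variable. Everything else is a formal consequence of Day's theorem combined with the general monoid/module machinery in a closed symmetric monoidal category.
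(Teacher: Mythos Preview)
Your argument is correct: this is exactly the standard construction of the closed symmetric monoidal structure on modules over a commutative monoid in a cocomplete closed symmetric monoidal category, specialized to $\cc A=[\cc C,\bb M_\bullet]$ with its Day convolution. The paper itself does not supply a proof at all; it simply records the lemma as a formal consequence of Day's theorem, so your proposal is a fully spelled-out version of what the paper leaves as a citation.

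The only comparison worth making is one of packaging. An alternative route, implicit in the surrounding material, is to use Theorem~\ref{mmss} to identify $[\cc C,\bb M_\bullet]_R$ with $[\cc C_R,\bb M_\bullet]$ as symmetric monoidal categories (when $R$ is commutative $\cc C_R$ is symmetric monoidal), and then invoke Day convolution once more on $[\cc C_R,\bb M_\bullet]$ to obtain the closed symmetric monoidal structure directly. That route avoids the explicit coequalizer/equalizer bookkeeping but relies on the MMSS identification, whereas your argument is self-contained and works in any closed symmetric monoidal cocomplete ambient category. Both approaches yield the same structure.
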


Let $\cc C$ be a symmetric monoidal $\bb M_\bullet$-category and $R$
a (not necessarily commutative) ring object in $[\cc C,\bb
M_\bullet]$. Mandell, May, Schwede and
Shipley~\cite[Section~2]{MMSS} suggested another description of the
category of $\cc C$-spaces over $R$. Namely, $[\cc C,\bb
M_\bullet]_R$ can be identified with the category of $\cc
C_R$-spaces, where
   $$\cc C_R(a,b):=[\cc C,\bb M_\bullet]_R(\cc C(b,-)\wedge R,\cc C(a,-)\wedge R).$$
The right hand side refers to the $\bb M_\bullet$-object in $[\cc
C,\bb M_\bullet]_R$. Composition is inherited from composition in
$[\cc C,\bb M_\bullet]_R$. Thus $\cc C_R$ can be regarded as the
full $\bb M_\bullet$-subcategory of $[\cc C,\bb M_\bullet]_R^{\op}$
whose objects are the free $R$-modules $\cc C(a,-)\wedge R$. By
construction,
   \begin{gather*}
    \cc C_R(a,b)=[\cc C,\bb M_\bullet]_R(\cc C(b,-)\wedge R,\cc C(a,-)\wedge R)\cong
       [\cc C,\bb M_\bullet](\cc C(b,-),\cc C(a,-)\wedge R)\cong\\
       (\cc C(a,-)\wedge R)(b)\cong\int^{(f,g)\in\cc C\otimes\cc C}\cc C(f\diamond g,b)\wedge\cc C(a,f)\wedge R(g).
   \end{gather*}

If $R$ is commutative, then $\cc C_R$ is symmetric monoidal with
monoidal product $\diamond_R$ on objects being defined as the
monoidal product $\diamond$ in $\cc C$. Its unit object is the unit
object $u$ of $\cc C$. The product $f\diamond_R f'$ of morphisms
$f:\cc C(b,-)\wedge R\to\cc C(a,-)\wedge R$ and $f':\cc
C(b',-)\wedge R\to\cc C(a',-)\wedge R$ is
   \begin{gather*}
    f\diamond_R f':\cc C(b\diamond b',-)\wedge R\cong(\cc C(b,-)\wedge R)\wedge_R(\cc C(b',-)\wedge R)\to\\
       (\cc C(a,-)\wedge R)\wedge_R(\cc C(a',-)\wedge R)\cong\cc C(a\diamond a',-)\wedge R.
   \end{gather*}

The proof of the following fact literally repeats that
of~\cite[2.2]{MMSS}, which is purely categorical and is not
restricted by topological categories only.

\begin{thm}[Mandell--May--Schwede--Shipley]\label{mmss}
Let $\cc C$ be a symmetric monoidal $\bb M_\bullet$-catego\-ry and
$R$ a ring object in $[\cc C,\bb M_\bullet]$. Then the categories
$[\cc C,\bb M_\bullet]_R$ of $\cc C$-spectra over $R$ and $[\cc
C_R,\bb M_\bullet]$ of motivic $\cc C_R$-spaces are isomorphic. If
$R$ is commutative, then the isomorphism $[\cc C,\bb
M_\bullet]_R\cong[\cc C_R,\bb M_\bullet]$ is an isomorphism of
symmetric monoidal categories.
\end{thm}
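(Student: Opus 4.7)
The plan is to follow an enriched Morita / Yoneda argument. Note that $\cc C_R$ has been constructed precisely as the full $\bb M_\bullet$-subcategory of $[\cc C,\bb M_\bullet]_R^{\op}$ on the free $R$-modules $F_aR:=\cc C(a,-)\wedge R$, so enriched presheaves on $\cc C_R$ should be ``the same'' as $R$-modules by a density argument that exploits the fact every $R$-module is canonically a coend of free $R$-modules.

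First I would construct two $\bb M_\bullet$-enriched functors in opposite directions. From an $R$-module $X$, define a $\cc C_R$-space $Y_X$ by the Yoneda-style formula $Y_X(a):=[\cc C,\bb M_\bullet]_R(F_aR,X)\cong X(a)$, with the action of $\cc C_R(a,b)=[\cc C,\bb M_\bullet]_R(F_bR,F_aR)$ on $Y_X(a)$ given by precomposition. Conversely, from a $\cc C_R$-space $Y$, define an $R$-module $X_Y$ with underlying $\cc C$-space $a\mapsto Y(a)$: the $\cc C$-action arises by restriction along the canonical $\bb M_\bullet$-functor $\cc C\to\cc C_R$ induced from the map $\cc C(a,b)\to(\cc C(a,-)\wedge R)(b)\cong\cc C_R(a,b)$ (smash with the unit $\lambda:S^0\to R(u)$ and use the factorization $b\cong u\diamond b$), while the structure maps $X_Y(a)\wedge R(b)\to X_Y(a\diamond b)$ come from the wedge summand $R(b)\cong\cc C(a,a)\wedge R(b)\to\cc C_R(a,a\diamond b)$ picked out by the identity $\id_a$, composed with the functorial action of $Y$.

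Second, I would verify that $X\mapsto Y_X$ and $Y\mapsto X_Y$ are mutually inverse. On underlying motivic spaces the round trip is the identity by construction, so the remaining content is that the $\cc C_R$-action encodes exactly the combined data of a $\cc C$-space structure plus a compatible $R$-action. This is precisely what the coend presentation $\cc C_R(a,b)\cong\colim_{f\diamond g\to b}\cc C(a,f)\wedge R(g)$ was designed to achieve, and the verification reduces to a routine coend manipulation together with the unit and associativity axioms of Definition~\ref{spectra}.

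Finally, when $R$ is commutative, Lemma~\ref{commonoid} gives a symmetric monoidal structure on $[\cc C,\bb M_\bullet]_R$ and Day's theorem gives one on $[\cc C_R,\bb M_\bullet]$, and I would check that our isomorphism intertwines them. On free modules this reduces to the identification $F_aR\wedge_R F_bR\cong F_{a\diamond b}R$, which is a direct consequence of $\cc C(a,-)\wedge\cc C(b,-)\cong\cc C(a\diamond b,-)$ and the definition of $\wedge_R$ as the coequalizer presenting the balanced smash; this extends to arbitrary $R$-modules by density, since both smash products preserve colimits in each variable and every $R$-module is a coend of free ones. The main obstacle is purely bookkeeping: one must track that the enriched unit, associativity, and naturality diagrams in the $\cc C_R$-space picture correspond diagram-by-diagram to those in the $R$-module picture. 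Since this is entirely formal, the argument of~\cite[2.2]{MMSS} transfers verbatim to the motivic setting.
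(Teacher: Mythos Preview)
Your proposal is correct and aligns with the paper's approach: the paper does not give an independent proof but simply notes that the argument of~\cite[2.2]{MMSS} is purely categorical and transfers verbatim to the motivic setting, which is exactly what you conclude after sketching the enriched Yoneda/density argument underlying that reference. Your write-up is thus a fleshed-out version of the same citation rather than a different route.
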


\section{Motivic spectra associated with group schemes}\label{motspgrsch}

After collecting basic facts for $\cc C$-spectra over a ring object
$R$ in $[\cc C,\bb M_\bullet]$, where $\cc C$ is a symmetric
monoidal $\bb M_\bullet$-category, in this section we give
particular examples we shall work with in this paper. The framework
we have fixed above allows a lot of flexibility and we invite the
interested reader to construct further examples. A canonical choice
for a ring object, which we denote by $\cc S$ or by $\cc S_{\cc C}$
if we want to specify the choice of the diagram $\bb
M_\bullet$-category $\cc C$, is the motivic sphere spectrum
   $$\cc S=(S^0,T,T^2,\ldots),$$
where $T^n$ is the Nisnevich sheaf $\bb A^n_S/(\bb A^n_S-0)$. Another natural choice
 is the motivic sphere $T^2$-spectrum
    $$\cc S=(S^0,T^2,T^4,\ldots)$$
consisting of the even dimensional spheres $T^{2n}$. The latter
spectrum is necessary below when working with, say, special linear
or symplectic groups. From the homotopy theory viewpoint, stable
homotopy categories of motivic $T$- and $T^2$-spectra are Quillen
equivalent (see, e.g.,~\cite[3.2]{PW}). Where it is possible we
follow the terminology and notation of~\cite{MMSS} in order to be
consistent with the classical topological examples.

We should stress that in all our examples below the category of
diagrams $\cc C$ is defined in terms of group schemes. Our first
example is elementary, but most important for our analysis.

\begin{ex}[Ordinary motivic $T$-spectra]\label{ordsp}
Let $\cc N$ be the (unbased) category of non-negative integers $\bb Z_{\geq 0}$,
with only ``identity morphisms motivic spaces" between them. Precisely,
  \[\cc N(m,n)=
  \begin{cases}
  pt, & m=n \\
  \emptyset, & m\not= n
  \end{cases}\]
The symmetric monoidal structure is given by addition $m+n$, with 0
as unit. An $\cc N$-space is a sequence of based motivic spaces. The
canonical enriched functor $\cc S=\cc S_{\cc N}$ takes $n\in\bb
Z_{n\geq 0}$ to $T^n$. It is a ring object of $[\cc N,\bb
M_\bullet]$, but it is not commutative since permutations of motivic
spheres $T^n$ are not identity maps. This is a typical difficulty in
defining the smash product in stable homotopy theory. A {\it motivic
$T$-spectrum\/} is an $\cc N$-spectrum over $\cc S$. Let $Sp_T^{\cc
N}(S)$ denote the category of $\cc N$-spectra over $\cc S$. Since
$T^n$ is the $n$-fold smash product of $T$, the category $Sp_T^{\cc
N}(S)$ is isomorphic to the category of ordinary motivic $T$-spectra
$Sp_T(S)$.

The shift desuspension functors to $\cc N$-spectra are given by $(F_mA)_n=A\wedge T^{n-m}$
(by definition, $T^{n-m}=*$ if $n<m$).
The smash product of $\cc N$-spaces (not $\cc N$-spectra!) is given by
   $$(X\wedge Y)_n=\bigvee_{i=0}^nX_i\wedge Y_{n-i}.$$
The category $\cc N_{\cc S}$ such that an $\cc N$-spectrum is an
$\cc N_{\cc S}$-space has morphism motivic spaces
   $$\cc N_{\cc S}(m,n)=T^{n-m}.$$
The category of ordinary motivic $T^2$-spectra $Sp^{\cc N}_{T^2}(S)$
is defined in a similar fashion.
\end{ex}

As we have noticed above, $\cc S_{\cc N}$ is not commutative, and
hence the category of $\cc N$-spectra $Sp_T^{\cc N}(S)$ does not
have a smash product that makes it a closed symmetric monoidal
category. In all other examples below the ring object $\cc S_{\cc
C}\in[\cc C,\bb M_\bullet]$ is commutative, and therefore the
category of $\cc C$-spectra over $\cc S_{\cc C}$ is closed symmetric
monoidal. The first classical example is that for symmetric spectra
(we refer the reader to~\cite{Jar} for further details).

\begin{ex}[Symmetric motivic $T$-spectra]\label{symsp}
Let $\Sigma$ be the (unbased) category of finite sets
$\mathbf{m}=\{1,\ldots,m\}$. By definition, $\mathbf{0}:=\emptyset$.
Its morphisms motivic spaces $\Sigma(\mathbf{m},\mathbf{n})$ are
given by symmetric groups canonically regarded as group $S$-schemes.
Precisely,
  \[\Sigma(\mathbf{m},\mathbf{n})=
  \begin{cases}
  \Sigma_m, & m=n \\
  \emptyset, & m\not= n
  \end{cases}\]
Notice that the underlying category associated with $\Sigma$ is $\bigsqcup_{n\geq 0}\Sigma_n$. The
symmetric monoidal structure on $\Sigma$ is given by concatenation
of sets $\mathbf{m}\sqcup\mathbf{n}$ and block sum of permutations,
with $\mathbf 0$ as unit. Commutativity of the monoidal product is
given by the shuffle permutation
$\chi_{m,n}:\mathbf{m}\sqcup\mathbf{n}\lra{\cong}\mathbf{n}\sqcup\mathbf{m}$
from the symmetric group $\Sigma_{m+n}$. The category $[\Sigma,\bb
M_\bullet]$ is isomorphic to the category of symmetric sequences of
pointed motivic spaces, i.e. the category of non-negatively graded
pointed motivic spaces with symmetric groups actions.

The canonical enriched functor $\cc S=\cc
S_{\Sigma}$ takes $\mathbf n$ to $T^n$ ($\Sigma_n$ permutes the $n$
copies of $T$ or, equivalently, the coordinates of $\bb A^n_S/(\bb
A^n_S-0)$). It is a  commutative ring object of $[\Sigma,\bb
M_\bullet]$. A {\it symmetric motivic $T$-spectrum\/} is a
$\Sigma$-spectrum over $\cc S$. Note that there is a canonical $\bb
M_\bullet$-functor $\iota:\cc N\to\Sigma$ mapping $n$ to $\mathbf n$
such that $\cc S_{\cc N}=\cc S_\Sigma\circ\iota$.

The shift desuspension functors to symmetric spectra are given by
   $$(F_mA)(\mathbf n)=\Sigma_{n+}\wedge_{\Sigma_{n-m}}(A\wedge T^{n-m}).$$
In turn, the smash product of $\Sigma$-spaces is given by
   $$(X\wedge Y)(\mathbf n)=\bigvee_{i=0}^n\Sigma_{n+}\wedge_{\Sigma_i\times\Sigma_{n-i}}
     X(\mathbf i)\wedge Y(\mathbf{n-i}).$$
The category $\Sigma_{\cc S}$ such that a $\Sigma$-spectrum is a $\Sigma_{\cc S}$-space
(see Theorem~\ref{mmss}) has morphism spaces
   $$\Sigma_{\cc S}(\mathbf{m},\mathbf{n})=\Sigma_{n+}\wedge_{\Sigma_{n-m}}T^{n-m}.$$
We shall write $Sp^\Sigma_T(S)$ to denote the category of symmetric
motivic $T$-spectra. The category of symmetric motivic $T^2$-spectra
$Sp^\Sigma_{T^2}(S)$ is defined in a similar fashion.
\end{ex}

\begin{ex}[GL-motivic $T$-spectra]\label{glsp}
Let $\textrm{GL}$ be the (unbased) category whose objects are the
non-negative integers $\bb Z_{\geq 0}$. Its morphisms motivic spaces
$\textrm{GL}(m,n)$ are given by the following group $S$-schemes:
  \[\textrm{GL}(m,n)=
  \begin{cases}
  GL_m, & m=n \\
  \emptyset, & m\not= n
  \end{cases}\]
The symmetric monoidal structure on $\textrm{GL}$ is given by
addition of integers and standard concatenation $GL_m\times GL_n\to
GL_{m+n}$ by block matrices. Commutativity of the monoidal product
is given by the shuffle permutation matrix $\chi_{m,n}\in GL_{m+n}$.
The canonical enriched functor $\cc S=\cc S_{\textrm{GL}}$ takes $n$
to $T^n$ ($GL_n$ acts on $T^n=\bb A^n_S/(\bb A^n_S-0)$ in a
canonical way). It is a commutative ring object of $[\textrm{GL},\bb
M_\bullet]$ because each $GL_n$ contains $\Sigma_n$ as permutation
matrices. A {\it GL-motivic $T$-spectrum\/} is a
$\textrm{GL}$-spectrum over $\cc S$. Note that there is a canonical
$\bb M_\bullet$-functor $\iota:\Sigma\to\textrm{GL}$ mapping
$\mathbf n$ to $n$ and mapping permutations to their permutation
matrices such that $\cc S_{\Sigma}=\cc S_{\textrm{GL}}\circ\iota$.

The shift desuspension functors to GL-spectra are given by the
induced motivic spaces (we refer the reader to~\cite{GM} for basic
facts on equivariant homotopy theory)
   \begin{equation}\label{desgl}
    (F_mA)(n)=(GL_{n})_+\wedge_{GL_{n-m}}(A\wedge T^{n-m}).
   \end{equation}
In turn, the smash product of GL-spaces is given by
   $$(X\wedge Y)(n)=\bigvee_{i=0}^n (GL_{n})_+\wedge_{GL_i\times GL_{n-i}}X(i)\wedge Y(n-i).$$
The category $\textrm{GL}_{\cc S}$ such that a GL-spectrum is a
$\textrm{GL}_{\cc S}$-space (see Theorem~\ref{mmss}) has morphism
spaces
   $$\textrm{GL}_{\cc S}(m,n)=(GL_{n})_+\wedge_{GL_{n-m}}T^{n-m}.$$
A typical example of a GL-spectrum is the algebraic cobordism
$T$-spectrum $MGL$ (this follows from~\cite[Section~4]{PW}). We
shall write $Sp^{\textrm{GL}}_T(S)$ to denote the category of
GL-motivic $T$-spectra.
\end{ex}

\begin{ex}[SL-motivic $T^2$-spectra]\label{slsp}
In contrast to general linear groups, special linear groups contain
only even permutations as their permutation matrices. We can equally
define the ``SL-category'' as in Example~\ref{glsp} whose objects
are all non-negative integers. The problem with such a $\bb
M_\bullet$-category of diagrams is that it is not symmetric monoidal
(unless characteristic is 2), and hence a problem with defining
corresponding ring objects. To fix the problem, we work with even
non-negative integers $2\bb Z_{\geq 0}$. We define morphisms motivic
spaces $\textrm{SL}(2m,2n)$ by the following group $S$-schemes:
  $$\textrm{SL}(2m,2n)=
  \begin{cases}
  SL_{2m}, & m=n \\
  \emptyset, & m\not= n
  \end{cases}$$
We use the embedding $i_n:\Sigma_n\hookrightarrow SL_{2n}$ taking
$\sigma\in\Sigma_n$ to the permutation matrix associated with
$\wt\sigma\in\Sigma_{2n}$, where $\wt\sigma(2i-1)=2\sigma(i)-1$ and
$\wt\sigma(2i)=2\sigma(i)$. With these embeddings of symmetric
groups into even-dimensional special linear groups the diagram
category SL becomes a symmetric monoidal $\bb M_\bullet$-category.
The symmetric monoidal structure on $\textrm{SL}$ is given by
addition of integers and standard concatenation $SL_{2m}\times
SL_{2n}\to SL_{2m+2n}$ by block matrices. Commutativity of the
monoidal product is given by the shuffle permutation matrix
$\chi_{2m,2n}=i_n(\chi_{m,n})\in SL_{2m+2n}$. The canonical enriched
functor $\cc S=\cc S_{\textrm{SL}}$ takes $2n$ to $T^{2n}$
($SL_{2n}$ acts on $T^{2n}=\bb A^{2n}_S/(\bb A^{2n}_S-0)$ in a
canonical way). It is a commutative ring object of $[\textrm{SL},\bb
M_\bullet]$ because each $SL_{2n}$ contains $\Sigma_n$ as
permutation matrices defined above. An {\it SL-motivic
$T^2$-spectrum\/} is an $\textrm{SL}$-spectrum over $\cc S$. Note
that there is a canonical $\bb M_\bullet$-functor
$\iota:\Sigma\to\textrm{SL}$ mapping $\mathbf n$ to $2n$ and
$\sigma\in\Sigma_n$ to $i_n(\sigma)$ such that the symmetric sphere
$T^2$-spectrum $(S^0,T^2,T^4,\ldots)$ equals $\cc
S_{\textrm{SL}}\circ\iota$. If there is no likelihood of confusion
we shall also denote the symmetric sphere $T^2$-spectrum
$(S^0,T^2,T^4,\ldots)$ by $\cc S_\Sigma$ whenever we work with
$T^2$-spectra. Notice that this $T^2$-spectrum $\cc S_\Sigma$ is a
commutative ring object of $[\Sigma,\bb M_\bullet]$ and the category
of right modules over $\cc S_\Sigma$ is isomorphic to the category
of symmetric $T^2$-spectra $Sp_{T^2}^\Sigma(S)$.

The shift desuspension functors to SL-spectra are given by the
induced motivic spaces
  \begin{equation}\label{dessl}
   (F_{2m}A)(2n)=(SL_{2n})_{+}\wedge_{SL_{2n-2m}}(A\wedge T^{2n-2m}).
  \end{equation}
In turn, the smash product of SL-spaces is given by
   $$(X\wedge Y)(2n)=\bigvee_{i=0}^n (SL_{2n})_{+}\wedge_{SL_{2i}\times SL_{2n-2i}}X(2i)\wedge Y(2n-2i).$$
The category $\textrm{SL}_{\cc S}$ such that an SL-spectrum is an
$\textrm{SL}_{\cc S}$-space (see Theorem~\ref{mmss}) has morphism
spaces
   $$\textrm{SL}_{\cc S}(2m,2n)=(SL_{2n})_{+}\wedge_{SL_{2n-2m}}T^{2n-2m}.$$
A typical example of an SL-spectrum is the algebraic special linear
cobordism $T^2$-spectrum $MSL$ in the sense of
Panin--Walter~\cite[Section~4]{PW}. We shall write
$Sp^{\textrm{SL}}_{T^2}(S)$ to denote the category of SL-motivic
$T^2$-spectra.
\end{ex}

\begin{ex}[Symplectic motivic $T^2$-spectra]\label{spsp}
Following~\cite[Section~6]{PW} we write the standard symplectic form
on the trivial vector bundle of rank $2n$ as
\[\omega_{2n}=\begin{bmatrix}
  0&1&{} & &{}\\
  -1&0&{} &0 &{}\\
   & {} & \ddots & {}\\
  &0&{} &0 &1\\
  &&{} &-1 &0\\
  \end{bmatrix}\]
The canonical symplectic isometry $(\cc O_S,\omega_{2n})\cong(\cc
O_S,\omega_{2})^{\oplus n}$ gives rise to a natural action of
$\Sigma_n$. It permutes the $n$ orthogonal direct summands, and
hence one gets an embedding $i_n:\Sigma_n\hookrightarrow Sp_{2n}$,
which sends permutations to the same permutation matrices as in
Example~\ref{slsp}. Let Sp have objects $2\bb Z_{\geq 0}$ and let
morphisms motivic spaces $\textrm{Sp}(2m,2n)$ be defined by the
following group $S$-schemes:
  \[\textrm{Sp}(2m,2n)=
  \begin{cases}
  Sp_{2m}, & m=n \\
  \emptyset, & m\not= n
  \end{cases}\]
With embeddings of symmetric groups into symplectic groups above the
diagram category Sp  becomes a symmetric monoidal $\bb
M_\bullet$-category. The symmetric monoidal structure on
$\textrm{Sp}$ is given by addition of integers and standard
concatenation $Sp_{2m}\times Sp_{2n}\to Sp_{2m+2n}$ by block
matrices. Commutativity of the monoidal product is given by the
shuffle permutation matrix $\chi_{2m,2n}\in Sp_{2m+2n}$. The
canonical  enriched functor $\cc S=\cc S_{\textrm{Sp}}$ takes $2n$
to $T^{2n}$ ($Sp_{2n}$ acts on $T^{2n}=\bb A^{2n}_S/(\bb
A^{2n}_S-0)$ in a canonical way). It is a commutative ring object of
$[\textrm{Sp},\bb M_\bullet]$ because each $Sp_{2n}$ contains
$\Sigma_n$ as permutation matrices defined above. A {\it symplectic
motivic $T^2$-spectrum\/} is an $\textrm{Sp}$-spectrum over $\cc S$.
Note that there is a canonical $\bb M_\bullet$-functor
$\iota:\Sigma\to\textrm{Sp}$ mapping $\mathbf n$ to $2n$ and
$\sigma\in\Sigma_n$ to $i_n(\sigma)$ such that the symmetric sphere
$T^2$-spectrum $\cc S_\Sigma=(S^0,T^2,T^4,\ldots)$ equals $\cc
S_{\textrm{Sp}}\circ\iota$.

The shift desuspension functors to symplectic spectra are given by
the induced motivic spaces
  \begin{equation}\label{dessp}
   (F_{2m}A)(2n)=(Sp_{2n})_{+}\wedge_{Sp_{2n-2m}}(A\wedge T^{2n-2m}).
  \end{equation}
In turn, the smash product of Sp-spaces is given by
   $$(X\wedge Y)(2n)=\bigvee_{i=0}^n (Sp_{2n})_{+}\wedge_{Sp_{2i}\times Sp_{2n-2i}}X(2i)\wedge Y(2n-2i).$$
The category $\textrm{Sp}_{\cc S}$ such that an Sp-spectrum is an
$\textrm{Sp}_{\cc S}$-space (see Theorem~\ref{mmss}) has morphism
spaces
   $$\textrm{Sp}_{\cc S}(2m,2n)=(Sp_{2n})_{+}\wedge_{Sp_{2n-2m}}T^{2n-2m}.$$
A typical example of a symplectic spectrum is the algebraic
symplectic cobordism $T^2$-spectrum $MSp$ in the sense of
Panin--Walter~\cite[Section~6]{PW}. We shall write
$Sp^{\textrm{Sp}}_{T^2}(S)$ to denote the category of symplectic
motivic $T^2$-spectra.
\end{ex}

In the next two examples we suppose $\frac12\in S$ and follow the
terminology and notation of~\cite{Con}. Denote by $q_{2m}$ the {\it
standard split quadratic form\/}
   $$q_{2m}=x_{1}x_{2}+x_{3}x_{4}+\cdots+x_{2m-1}x_{2m}.$$
We define $O_{2m}:=O(q_{2m})$ and $SO_{2m}:=SO(q_{2m})$.

\begin{ex}[Orthogonal motivic $T^2$-spectra]\label{orthsp}
Let O have objects $2\bb Z_{\geq 0}$ and let morphisms motivic
spaces $\textrm{O}(2m,2n)$ be defined by the following group
$S$-schemes:
  \[\textrm{O}(2m,2n)=
  \begin{cases}
  O_{2m}, & m=n \\
  \emptyset, & m\not= n
  \end{cases}\]
The corresponding embeddings of symmetric groups into orthogonal
groups are the same with those of Example~\ref{slsp}. Then the
diagram category O becomes a symmetric monoidal $\bb
M_\bullet$-category. The symmetric monoidal structure on
$\textrm{O}$ is given by addition of integers and standard
concatenation $O_{2m}\times O_{2n}\to O_{2m+2n}$ by block matrices.
Commutativity of the monoidal product is given by the shuffle
permutation matrix $\chi_{2m,2n}\in O_{2m+2n}$. The canonical
enriched functor $\cc S=\cc S_{\textrm{O}}$ takes $2n$ to $T^{2n}$
($O_{2n}$ acts on $T^{2n}=\bb A^{2n}_S/(\bb A^{2n}_S-0)$ in a
canonical way). It is a  commutative ring object of $[\textrm{O},\bb
M_\bullet]$ because each $O_{2n}$ contains $\Sigma_n$ as permutation
matrices defined above. An {\it orthogonal motivic $T^2$-spectrum\/}
is an $\textrm{O}$-spectrum over $\cc S$. Note that there is a
canonical $\bb M_\bullet$-functor $\iota:\Sigma\to\textrm{O}$
mapping $\mathbf n$ to $2n$ and $\sigma\in\Sigma_n$ to $i_n(\sigma)$
such that the symmetric sphere $T^2$-spectrum $\cc
S_\Sigma=(S^0,T^2,T^4,\ldots)$ equals $\cc
S_{\textrm{O}}\circ\iota$.

The shift desuspension functors to orthogonal spectra are given by
the induced motivic spaces
  \begin{equation}\label{deso}
   (F_{2m}A)(2n)=(O_{2n})_{+}\wedge_{O_{2n-2m}}(A\wedge T^{2n-2m}).
  \end{equation}
In turn, the smash product of O-spaces is given by
   $$(X\wedge Y)(2n)=\bigvee_{i=0}^n (O_{2n})_{+}\wedge_{O_{2i}\times O_{2n-2i}}X(2i)\wedge Y(2n-2i).$$
The category $\textrm{O}_{\cc S}$ such that an O-spectrum is an
$\textrm{O}_{\cc S}$-space (see Theorem~\ref{mmss}) has morphism
spaces
   $$\textrm{O}_{\cc S}(2m,2n)=(O_{2n})_{+}\wedge_{O_{2n-2m}}T^{2n-2m}.$$
We shall write $Sp^{\textrm{O}}_{T^2}(S)$ to denote the category of
orthogonal motivic $T^2$-spectra.
\end{ex}

\begin{ex}[SO-motivic $T^2$-spectra]\label{sosp}
The definition of this type of motivic $T^2$-spectra literally
repeats Example~\ref{orthsp} if we replace $O_{2n}$ with $SO_{2n}$
in all relevant places. The shift desuspension functors to
SO-spectra are given by the induced motivic spaces
  \begin{equation}\label{desso}
   (F_{2m}A)(2n)=(SO_{2n})_{+}\wedge_{SO_{2n-2m}}(A\wedge T^{2n-2m}).
  \end{equation}
The category $\textrm{SO}_{\cc S}$ such that an SO-spectrum is an
$\textrm{SO}_{\cc S}$-space (see Theorem~\ref{mmss}) has morphism
spaces
   $$\textrm{SO}_{\cc S}(2m,2n)=(SO_{2n})_{+}\wedge_{SO_{2n-2m}}T^{2n-2m}.$$
We shall write  $Sp^{\textrm{SO}}_{T^2}(S)$ to denote the category
of SO-motivic $T^2$-spectra.
\end{ex}

\section{Semistable motivic spectra}\label{semsection}

One of the tricky concepts in the stable homotopy
theory of classical symmetric spectra is that of semistability. The
same concept of semistability occurs in the stable homotopy theory
of motivic symmetric $T$- or $T^2$-spectra.

Namely, following R\"ondigs, Spitzweck and \O stv\ae r~\cite{RSO},
a motivic symmetric $T$-spectrum (likewise $T^2$-spectrum)
$E$ is said to be {\it semistable\/} if the natural map
   $$\phi(E):T\wedge E\to sh(E)$$
is a stable weak equivalence of underlying (non-symmetric) motivic spectra.
In level $n$ it is defined as the composite map
   $$T\wedge E_n\xrightarrow{\cong}E_n\wedge T\to E_{n+1}\xrightarrow{\chi_{n,1}}E_{1+n}$$
of the twist isomorphism, the $n$th structure map of the spectrum $E$ and the cyclic permutation
$\chi_{n,1}=(1,2,\ldots, n+1)$.

Similarly to the classical symmetric $S^1$-spectra (see, e.g.,~\cite[I.3.16]{Sch})
a motivic symmetric $T$- or $T^2$-spectrum
$X$ is semistable if for every $n$ and every even permutation
$\sigma\in\Sigma_n$ the action of $\sigma$ on $X_n$ coincides with the identity
in the pointed motivic unstable homotopy category~\cite[3.2]{RSO}.

It follows from Examples~\ref{glsp}-\ref{sosp} that every $G$-spectrum, where $G\in\{GL,SL,Sp\}$,
is a symmetric $T$- or $T^2$-spectrum.
It follows from~\cite[3.2]{Sch08} that every orthogonal $S^1$-spectrum of topological spaces
is semistable. The following theorem is a motivic counterpart of that fact.

\begin{thm}\label{semist}
Let $G\in\{GL,SL,Sp\}$. Then every $G$-spectrum is semistable as a symmetric $T$- or $T^2$-spectrum.
\end{thm}

\begin{proof}
GL-, SL- or Sp-motivic spectra have the property that
the action of the symmetric group $\Sigma_n$ on the motivic
spaces of GL-, SL- or Sp-motivic spectra factors through the action
of $GL_n$, $SL_{2n}$ and $Sp_{2n}$ respectively. Therefore, even
permutations are $\bb A^1$-homotopic to identity
(see~\cite[Section~2]{GN}).

In more detail, this means that if $E$ is a $G$-spectrum and
$\sigma\in\Sigma_n$ is an even permutation, then there is an $\bb A^1$-homotopy
$E_n\to\uhom(\bb A^1,E_n)$ between the action of $\sigma$ and the identity map.
It follows that the action of $\sigma$ on $E_n$ coincides with the identity
in the pointed motivic unstable homotopy category, and hence $E$
is semistable by~\cite[3.2]{RSO}.
\end{proof}

As a consequence of the preceding theorem, we get rid of the
semistability  phenomenon for GL-, SL- or Sp-motivic spectra.
Typical examples of such motivic spectra are $MGL$, $MSL$ and $MSp$.
It will follow from Theorem~\ref{compar} that symmetric motivic spectra are Quillen
equivalent to GL-, SL- or Sp-motivic spectra. Therefore we can make symmetric
motivic spectra GL-, SL- or Sp-motivic spectra by extending the
group action and then compute the latter spectra within GL-, SL- or
Sp-motivic spectra for which the phenomenon of semistability is irrelevant.

\section{Model structures for $\cc C$-spectra}\label{modelstr}

Throughout this section $\cc C$ is a small category of diagrams
enriched over $\bb M_\bullet$. Recall that $\bb M_\bullet$ is
equipped with the flasque motivic model structure in the sense
of~\cite{Is}. This model structure is simplicial, monoidal, proper,
cellular and weakly finitely generated in the sense of~\cite{DRO}.
It follows from~\cite[3.2.13]{MV} that the smash product preserves
motivic weak equivalences. Furthermore, $\bb M_\bullet$ satisfies
the monoid axiom in the sense of~\cite{SS}. In the flasque model structure every sheaf of the
form $X/U$ is cofibrant, where $U\hookrightarrow X$ is a
monomorphism in $Sm/S$. In particular, the sheaf $T^n$, $n\geq 0$,
is flasque cofibrant.

Following~\cite[Section~4]{DRO} $[\cc C,\bb M_\bullet]$ is equipped
with the pointwise model structure, where a map $f$ in $[\cc C,\bb
M_\bullet]$ is a {\it pointwise motivic weak equivalence\/}
(respectively a {\it pointwise fibration}) if $f(c)$ is a motivic
weak equivalence (respectively fibration) in $\bb M_\bullet$ for all
$c\in\Ob\cc C$. {\it Cofibrations\/} are defined as maps satisfying
the left lifting property with respect to all pointwise acyclic
fibrations.

\begin{prop}\label{pmodel}
The following statements are true:
\begin{itemize}
\item[$(1)$] $[\cc C,\bb M_\bullet]$ together with pointwise
fibrations, pointwise motivic equivalences
and cofibrations defined above is a simplicial cellular weakly
finitely generated $\bb M_\bullet$-model category.

\item[$(2)$] The pointwise model structure on $[\cc C,\bb M_\bullet]$ is proper.

\item[$(3)$] If $\cc C$ is a symmetric monoidal $\bb M_\bullet$-category, then $[\cc C,\bb M_\bullet]$
is a monoidal $\bb M_\bullet$-model category, and the monoid axiom in the sense of~\cite{SS} holds.
\end{itemize}
\end{prop}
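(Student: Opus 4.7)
The plan is to transfer the flasque motivic model structure on $\bb M_\bullet$ to $[\cc C,\bb M_\bullet]$ via the adjunctions $F_a\dashv Ev_a$ of Definition~\ref{eval}, following the template developed in~\cite[Section~4]{DRO}. I would take as generating (trivial) cofibrations of $[\cc C,\bb M_\bullet]$ the sets $FI=\{F_a(i)\mid a\in\Ob\cc C,\,i\in I\}$ and $FJ=\{F_a(j)\mid a\in\Ob\cc C,\,j\in J\}$, where $I,J$ are generating (trivial) cofibrations of the weakly finitely generated flasque motivic structure on $\bb M_\bullet$. Since all colimits in $[\cc C,\bb M_\bullet]$ are computed pointwise, each $Ev_a$ preserves the filtered colimits used to detect smallness, so $F_a$ preserves the relevant smallness hypotheses and $FI,FJ$ meet the requirements of the recognition theorem for weakly finitely generated $\bb M_\bullet$-model categories. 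Cellularity, as well as the simplicial and $\bb M_\bullet$-enrichment, are inherited pointwise.

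The essential condition for (1) is that every relative $FJ$-cell complex is a pointwise motivic equivalence. Evaluated at $b\in\cc C$, such a map is a transfinite composition of cobase changes of morphisms of the form $\cc C(a,b)\wedge j$ with $j\in J$. In all examples of Section~\ref{motspgrsch} the morphism object $\cc C(a,b)$ is a pointed smooth scheme of the form $X_+$, hence flasque cofibrant, so the pushout product axiom in $\bb M_\bullet$ makes $\cc C(a,b)\wedge j$ a trivial cofibration, and the monoid axiom for $\bb M_\bullet$ then takes care of pushouts and transfinite compositions.

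For (2), cofibrations in $[\cc C,\bb M_\bullet]$ are retracts of $FI$-cell complexes, hence pointwise cofibrations because each $F_a(i)(b)=\cc C(a,b)\wedge i$ is a cofibration in $\bb M_\bullet$. As fibrations and weak equivalences are detected pointwise and all limits and colimits are pointwise, both left and right properness descend from the flasque motivic model structure on $\bb M_\bullet$. For (3), Day's theorem gives the closed symmetric monoidal structure, and using the identity $F_a(A)\wedge F_b(B)\cong F_{a\diamond b}(A\wedge B)$ noted after Definition~\ref{eval}, the pushout product $F_a(i)\,\square\,F_b(i')$ of generators becomes $F_{a\diamond b}(i\,\square\,i')$. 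Thus the pushout product axiom and the monoid axiom in $[\cc C,\bb M_\bullet]$ reduce on generators to the corresponding axioms in $\bb M_\bullet$.

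The main obstacle I anticipate is precisely the verification that relative $FJ$-cell complexes give pointwise weak equivalences: this is the one step that genuinely uses both the monoid axiom in $\bb M_\bullet$ and the cofibrancy of the morphism objects $\cc C(a,b)$. Once this hypothesis has been confirmed for the group-scheme diagram categories of Section~\ref{motspgrsch}, the remainder is a routine application of the general machinery of~\cite{DRO}.
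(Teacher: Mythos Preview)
Your approach is essentially the one taken by the paper, which simply cites~\cite[4.2, 4.4, 4.6, 4.8]{DRO} for all three parts; you have sketched the content of those references.

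There is one unnecessary restriction in your sketch. You invoke cofibrancy of the morphism objects $\cc C(a,b)$ (appealing to the particular examples of Section~\ref{motspgrsch}) in order to make $\cc C(a,b)\wedge j$ a trivial cofibration, and similarly to conclude that cofibrations in $[\cc C,\bb M_\bullet]$ are pointwise cofibrations. But the proposition is stated for an \emph{arbitrary} small $\bb M_\bullet$-category $\cc C$, and the DRO machinery does not require this hypothesis. For~(1), the monoid axiom in $\bb M_\bullet$ alone guarantees that maps of the form $A\wedge j$ with $j\in J$ and $A$ arbitrary lie in a class of weak equivalences closed under cobase change and transfinite composition; no cofibrancy of $A=\cc C(a,b)$ is needed. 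For~(2), left properness is obtained in~\cite[4.8]{DRO} from the fact that $\bb M_\bullet$ is \emph{strongly left proper} in the sense of~\cite[4.6]{DRO}, a condition designed precisely so that one need not know that cofibrations in $[\cc C,\bb M_\bullet]$ are pointwise cofibrations. So your argument is correct for the diagram categories actually used in the paper, but falls short of the generality claimed in the proposition; dropping the cofibrancy crutch and invoking the monoid axiom and strong left properness directly closes the gap.
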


\begin{proof}
(1). This follows from~\cite[4.2, 4.4]{DRO}.

(2). Since $\bb M_\bullet$ is right proper, then so is $[\cc C,\bb
M_\bullet]$ by~\cite[4.8]{DRO}. Furthermore, $\bb M_\bullet$ is
strongly left proper in the sense of~\cite[4.6]{DRO}.
By~\cite[4.8]{DRO} $[\cc C,\bb M_\bullet]$ is also left proper.

(3). This follows from~\cite[4.4]{DRO}.
\end{proof}

\begin{cor}\label{adjunc}
Let $\cc C$ be contained in a bigger $\bb M_\bullet$-category of
diagrams $\cc D$. Then the canonical adjunction
   $$L:[\cc C,\bb M_\bullet]\rightleftarrows[\cc D,\bb M_\bullet]:U,$$
where $L$ is the enriched left Kan extension and $U$ is the
forgetful functor, is a Quillen pair with respect to the pointwise
model structure.
\end{cor}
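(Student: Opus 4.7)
The plan is to verify the corollary by exhibiting $U$ as a right Quillen functor, which is the easier half of a Quillen adjunction check and requires no intervention on the left Kan extension $L$.

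First I would observe that the forgetful functor $U$ is just precomposition with the inclusion $\iota : \cc C \hookrightarrow \cc D$ of enriched categories; that is, $U(Y)(c) = Y(\iota(c))$ for $Y \in [\cc D, \bb M_\bullet]$ and $c \in \Ob \cc C$. Its existence as a right adjoint to the enriched left Kan extension $L = \iota_!$ is the standard enriched Kan extension adjunction, so the adjunction $L \dashv U$ is in place without further argument.

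Next I would show directly from the definition of the pointwise model structure (Proposition~\ref{pmodel}(1)) that $U$ preserves pointwise fibrations and pointwise motivic weak equivalences. Indeed, if $f : Y \to Y'$ is a pointwise fibration in $[\cc D, \bb M_\bullet]$, then $f(d)$ is a motivic fibration in $\bb M_\bullet$ for every $d \in \Ob \cc D$. In particular $f(\iota(c)) = U(f)(c)$ is a motivic fibration for every $c \in \Ob \cc C$, so $U(f)$ is a pointwise fibration in $[\cc C, \bb M_\bullet]$. The same verbatim argument handles pointwise weak equivalences, and hence also pointwise acyclic fibrations.

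Finally I would invoke the standard characterization: an adjoint pair is Quillen as soon as the right adjoint preserves fibrations and acyclic fibrations. Since both classes of maps in $[\cc C,\bb M_\bullet]$ and $[\cc D,\bb M_\bullet]$ are defined objectwise, and $U$ is just evaluation at the subset $\Ob \cc C \subseteq \Ob \cc D$, this is immediate from the preceding paragraph, and the corollary follows. There is no real obstacle here; the only subtlety worth flagging is that the argument relies crucially on the pointwise nature of the model structure — the same proof would not work for, say, the stable model structure to be constructed later, which is why the statement is restricted to the pointwise case.
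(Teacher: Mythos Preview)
Your proof is correct and is exactly the standard argument. The paper gives no explicit proof for this corollary, treating it as an immediate consequence of the pointwise definition of fibrations and weak equivalences in Proposition~\ref{pmodel}; your verification that $U$ preserves (acyclic) fibrations is precisely the intended reasoning.
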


\begin{cor}\label{uhuh}
The categories of motivic $T$- and $T^2$-spectra $Sp^{\cc
N}_{T}(S)$, $Sp^{\cc N}_{T^2}(S)$, $Sp^{\Sigma}_{T}(S)$,
$Sp^{\Sigma}_{T^2}(S)$, $Sp^{\textrm{GL}}_{T}(S)$,
$Sp^{\textrm{SL}}_{T^2}(S)$, $Sp^{\textrm{O}}_{T^2}(S)$ and
$Sp^{\textrm{SO}}_{T^2}(S)$ of Examples~\ref{ordsp}-\ref{sosp} are
cellular weakly finitely generated proper $\bb M_\bullet$-model
categories. Moreover, $Sp^{\Sigma}_{T}(S)$, $Sp^{\Sigma}_{T^2}(S)$,
$Sp^{\textrm{GL}}_{T}(S)$, $Sp^{\textrm{SL}}_{T^2}(S)$,
$Sp^{\textrm{O}}_{T^2}(S)$ and $Sp^{\textrm{SO}}_{T^2}(S)$  are
monoidal $\bb M_\bullet$-model categories, and the monoid axiom
holds for them.
\end{cor}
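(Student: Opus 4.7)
The plan is to reduce the corollary to Proposition \ref{pmodel} via the reformulation provided by Theorem \ref{mmss}. For each of the eight examples, the sphere spectrum $\cc S_{\cc C}$ is a ring object in $[\cc C,\bb M_\bullet]$, and Theorem \ref{mmss} identifies the category of $\cc C$-spectra over $\cc S_{\cc C}$ with the functor category $[\cc C_{\cc S},\bb M_\bullet]$, where $\cc C_{\cc S}$ is the small $\bb M_\bullet$-category whose morphism spaces were explicitly computed in Section \ref{motspgrsch} (e.g.\ $\textrm{GL}_{\cc S}(m,n)=GL_{n+}\wedge_{GL_{n-m}}T^{n-m}$, and analogously in the other cases). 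So the strategy is: in each example, transport questions about spectra to the functor category $[\cc C_{\cc S},\bb M_\bullet]$ via this isomorphism, and then invoke Proposition \ref{pmodel}.

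Parts (1) and (2) of Proposition \ref{pmodel} require no monoidal hypotheses on the indexing category and apply directly to $[\cc C_{\cc S},\bb M_\bullet]$ in every one of the eight cases. This delivers the simplicial cellular weakly finitely generated $\bb M_\bullet$-model structure with pointwise fibrations and weak equivalences, together with both left and right properness, for all of the categories $Sp^{\cc N}_{T}(S)$, $Sp^{\cc N}_{T^2}(S)$, $Sp^{\Sigma}_T(S)$, $Sp^{\Sigma}_{T^2}(S)$, $Sp^{\textrm{GL}}_T(S)$, $Sp^{\textrm{SL}}_{T^2}(S)$, $Sp^{\textrm{O}}_{T^2}(S)$, and $Sp^{\textrm{SO}}_{T^2}(S)$.

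For the monoidal part the key observation is that in Examples \ref{symsp}--\ref{sosp} the ring object $\cc S_{\cc C}$ is commutative (this is precisely why each diagram category $\cc C$ was designed to contain the symmetric groups $\Sigma_n$ via the chosen embeddings $i_n$). By the second assertion of Theorem \ref{mmss}, commutativity of $\cc S_{\cc C}$ upgrades the isomorphism $[\cc C,\bb M_\bullet]_{\cc S_{\cc C}}\cong[\cc C_{\cc S},\bb M_\bullet]$ to an isomorphism of symmetric monoidal categories, so that $\cc C_{\cc S}$ itself is a symmetric monoidal $\bb M_\bullet$-category. Proposition \ref{pmodel}(3) then applies to $[\cc C_{\cc S},\bb M_\bullet]$ and yields the monoidal $\bb M_\bullet$-model structure together with the monoid axiom. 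The exclusion of $Sp^{\cc N}_T(S)$ and $Sp^{\cc N}_{T^2}(S)$ from the monoidal assertion is forced, as observed after Example \ref{ordsp}, by the fact that $\cc S_{\cc N}$ fails to be commutative.

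Since the argument is a purely formal reduction using two already-established results, there is no serious obstacle. The one point that warrants a little care is verifying that the categorical isomorphism of Theorem \ref{mmss} really does transport the pointwise model structure across the two descriptions of spectra, but this is immediate: cofibrations, fibrations and weak equivalences on the $[\cc C_{\cc S},\bb M_\bullet]$ side are tested by evaluation, and these evaluation functors correspond under the isomorphism of Theorem \ref{mmss} to evaluation of the underlying $\cc C$-space of a spectrum at the representables $\cc C(a,-)\wedge\cc S_{\cc C}$, so the two pointwise structures agree.
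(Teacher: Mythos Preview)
Your proposal is correct and follows exactly the paper's approach: the paper's proof is the single line ``This follows from Proposition~\ref{pmodel} and Theorem~\ref{mmss}'', and you have spelled out precisely how those two results combine. Your added remark that the pointwise model structure transports across the isomorphism of Theorem~\ref{mmss} is a reasonable clarification of what the paper leaves implicit.
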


\begin{proof}
This follows from Proposition~\ref{pmodel} and Theorem~\ref{mmss}.
\end{proof}

Recall that ordinary and symmetric motivic spectra have Quillen
equivalent stable model structures (see, e.g.,~\cite[4.31]{Jar}). We want
to extend the stable model structure further to diagram spectra of
Examples~\ref{glsp}-\ref{sosp}. To define it, we fix a symmetric
monoidal diagram $\bb M_\bullet$-category  $\cc C$ together with a
faithful strong symmetric monoidal functor of $\bb
M_\bullet$-categories $\iota:\Sigma\to\cc C$ and a sphere ring
spectrum $\cc S=\cc S_{\cc C}$ such that $\cc S_\Sigma=\cc S_{\cc
C}\circ\iota$. We shall always assume that $\cc S=(S^0,K,K^{\wedge
2},\ldots)$ with $K=T$ or $K=T^2$. By Theorem~\ref{mmss} we identify
the corresponding categories of spectra with categories
$[\Sigma_{\cc S},\bb M_\bullet]$ and $[\cc C_{\cc S},\bb
M_\bullet]$. As above, one has a natural adjunction
   $$L:[\Sigma_{\cc S},\bb M_\bullet]\rightleftarrows[\cc C_{\cc S},\bb M_\bullet]:U.$$

\begin{dfn}\label{modelst}
Following Hovey~\cite[8.7]{H}, define the {\it stable model
structure\/} on $[\cc C_{\cc S},\bb M_\bullet]$ to be the Bousfield
localization with respect to $\cc P$ of the pointwise model model
structure on $[\cc C_{\cc S},\bb M_\bullet]$, where
   $$\cc P=\{\lambda_n:F_{n+1}(C\wedge K)\to F_nC\}$$
as $C$ runs through the domains and codomains of the generating
cofibrations of $\bb M_\bullet$, and $n\geq 0$. The weak
equivalences of the model category $[\cc C_{\cc S},\bb M_\bullet]$
will be called {\it stable weak equivalences}. Note that if $\cc
C=\Sigma$ then the stable model structure is nothing but the
(flasque) stable model structure of symmetric spectra.
\end{dfn}

The preceding definition together with Corollary~\ref{adjunc} and~\cite[2.2]{H} imply
the following

\begin{prop}\label{adjuncst}
The canonical adjunction
   $$L:[\Sigma_{\cc S},\bb M_\bullet]\rightleftarrows[\cc C_{\cc S},\bb M_\bullet]:U,$$
where $L$ is the enriched left Kan extension and $U$ is the
forgetful functor, is a Quillen pair with respect to the stable
model structure.
\end{prop}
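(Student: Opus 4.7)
The plan is to apply the standard criterion for transferring a Quillen adjunction along a left Bousfield localization. By Corollary~\ref{adjunc} (combined with the identification of spectra with $\cc C_{\cc S}$-spaces provided by Theorem~\ref{mmss}), the adjunction $L \dashv U$ is already a Quillen pair for the pointwise model structures on $[\Sigma_{\cc S}, \bb M_\bullet]$ and $[\cc C_{\cc S}, \bb M_\bullet]$, and Corollary~\ref{uhuh} supplies the left properness and cellularity required to form the left Bousfield localizations that define the two stable model structures. The standard theorem on localizations of Quillen pairs then reduces the problem to checking that $L$ carries each member of the localizing set on the symmetric side to a stable weak equivalence on the $\cc C$ side.

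The crux is therefore the identification $L \circ F_n^\Sigma \cong F_{\iota(n)}^{\cc C}$ for every $n \geq 0$, where $F_n^\Sigma$ and $F_{\iota(n)}^{\cc C}$ denote the respective shift desuspension functors. This is a formal consequence of the universal property of enriched left Kan extension together with the representability of shift desuspensions: under the identifications of Theorem~\ref{mmss}, $F_n^\Sigma A = \Sigma_{\cc S}(n,-) \wedge A$ and $F_{\iota(n)}^{\cc C} A = \cc C_{\cc S}(\iota(n),-) \wedge A$, and enriched left Kan extension along the $\bb M_\bullet$-functor $\Sigma_{\cc S} \to \cc C_{\cc S}$ induced by $\iota$ sends the enriched representable at $n$ (tensored with $A$) to the enriched representable at $\iota(n)$ (tensored with $A$). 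The hypothesis $\cc S_\Sigma = \cc S_{\cc C} \circ \iota$ is precisely what guarantees that $\iota$ lifts to a functor on the module categories $\Sigma_{\cc S}$ and $\cc C_{\cc S}$, so that the construction makes sense at the level of spectra.

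Given this identification, $L$ sends each generator $\lambda_n^\Sigma : F_{n+1}^\Sigma(C \wedge K) \to F_n^\Sigma C$ of the localizing set on the symmetric side to the corresponding map $\lambda_{\iota(n)}^{\cc C} : F_{\iota(n)+1}^{\cc C}(C \wedge K) \to F_{\iota(n)}^{\cc C} C$, which belongs to the set $\cc P$ of Definition~\ref{modelst} and is therefore a stable weak equivalence in $[\cc C_{\cc S}, \bb M_\bullet]$. The descent criterion for Quillen pairs along left Bousfield localization is thus satisfied, and Proposition~\ref{adjuncst} follows.

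The step I expect to be the main obstacle is the verification of the isomorphism $L(F_n^\Sigma A) \cong F_{\iota(n)}^{\cc C} A$ \emph{as modules over the respective sphere rings}, rather than merely as underlying diagram spaces. At the underlying level this is an immediate instance of the coend formula for enriched left Kan extension, but one has to confirm that the resulting isomorphism is compatible with the module structures induced by $\cc S_\Sigma$ and $\cc S_{\cc C}$; this ultimately rests on the compatibility $\cc S_\Sigma = \cc S_{\cc C} \circ \iota$ together with the strong symmetric monoidality of $\iota$, and once this coherence is pinned down the remainder of the argument is purely formal.
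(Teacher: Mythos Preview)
Your argument is correct and coincides with what the paper compresses into the one-line justification ``The preceding definition together with Corollary~\ref{adjunc} imply the following'': the pointwise Quillen pair of Corollary~\ref{adjunc} descends to the stable localizations precisely because $L$ carries the localizing set $\cc P$ on the $\Sigma$ side into $\cc P$ on the $\cc C$ side, via the identification $L\circ F_n^{\Sigma}\cong F_{\iota(n)}^{\cc C}$ that you spell out. One cosmetic point: write $F_{\iota(n+1)}^{\cc C}$ rather than $F_{\iota(n)+1}^{\cc C}$, since for $\cc C\in\{\textrm{SL},\textrm{Sp},\textrm{O},\textrm{SO}\}$ the objects are even integers and $\iota(n+1)=\iota(n)+2$.
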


Since ordinary $T$- or $T^2$-spectra are Quillen equivalent to
symmetric spectra (see~\cite[4.31]{Jar}), the preceding proposition
implies the following

\begin{cor}\label{adjuncstcor}
The canonical adjunction
   $$L:[\cc N_{\cc S},\bb M_\bullet]\rightleftarrows[\cc C_{\cc S},\bb M_\bullet]:U,$$
where $L$ is the enriched left Kan extension and $U$ is the
forgetful functor, is a Quillen pair with respect to the stable
model structure.
\end{cor}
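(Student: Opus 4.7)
The plan is to exhibit $(L,U)$ as a composition of two Quillen pairs whose Quillen properties have already been established in the excerpt, and then invoke the elementary fact that a composite of Quillen adjunctions is a Quillen adjunction.

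First I would factor the underlying $\bb M_\bullet$-functor $\cc N\to\cc C$ through $\Sigma$. We have $\bb M_\bullet$-functors $\cc N\xrightarrow{\iota_1}\Sigma\xrightarrow{\iota_2}\cc C$, with $\iota_1$ the one exhibited in Example~\ref{symsp} and $\iota_2$ the structural symmetric monoidal functor supplied in each of Examples~\ref{glsp}--\ref{sosp}. By construction $\cc S_{\cc N}=\cc S_\Sigma\circ\iota_1$ and $\cc S_\Sigma=\cc S_{\cc C}\circ\iota_2$, so Theorem~\ref{mmss} yields the corresponding $\bb M_\bullet$-functors $\cc N_{\cc S}\to\Sigma_{\cc S}\to\cc C_{\cc S}$ whose composite is the underlying $\bb M_\bullet$-functor producing $L$. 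By the universal property of enriched left Kan extensions, the resulting composite adjunction
\begin{equation*}
[\cc N_{\cc S},\bb M_\bullet]\rightleftarrows[\Sigma_{\cc S},\bb M_\bullet]\rightleftarrows[\cc C_{\cc S},\bb M_\bullet]
\end{equation*}
is canonically isomorphic to $(L,U)$.

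Next I would check that each of the two factors is a Quillen pair with respect to the stable model structures. The first factor is the comparison adjunction between ordinary $K$-spectra and symmetric $K$-spectra, which is a Quillen equivalence (in particular a Quillen pair) by~\cite[4.31]{Jar}. The second factor is a Quillen pair by Proposition~\ref{adjuncst}. Composing them yields the desired statement.

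There is no serious obstacle here: the entire content of the corollary is the categorical observation that $(L,U)$ factors through symmetric spectra, after which the conclusion is purely formal. The only piece of bookkeeping worth being careful about is the identification of the composite of the two left Kan extensions with $L$ itself, which is immediate from the universal property of enriched Kan extensions applied to the factorization $\cc N_{\cc S}\to\Sigma_{\cc S}\to\cc C_{\cc S}$.
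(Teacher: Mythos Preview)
Your proposal is correct and follows essentially the same approach as the paper: the paper simply remarks that since ordinary and symmetric spectra are Quillen equivalent by~\cite[4.31]{Jar}, Proposition~\ref{adjuncst} implies the corollary, which is precisely the factorization through $[\Sigma_{\cc S},\bb M_\bullet]$ that you spell out. Your added care in identifying the composite left Kan extension with $L$ via the universal property is a reasonable elaboration of what the paper leaves implicit.
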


The main goal of the paper is to show that the adjunction of the
previous proposition is a Quillen equivalence for $\cc C$ being GL,
SL, Sp, O and SO if we make a further assumption that the base
scheme $S$ is the spectrum $\spec
k$ of a field $k$. This is treated in the next section.

\section{The comparison theorem}\label{compthmsection}

Throughout this section $k$ is any field. We shall freely operate with various equivalent
models for $SH(k)$ like $T$-/$\bb P^1$-spectra or
$(S^1,\gmp)$-bispectra. It will always be clear which of the models
is used.

The natural Quillen equivalences
   $$Sp_{T}^{\cc N}(k)\rightleftarrows Sp_{T}^{\Sigma}(k),\quad
     Sp_{T^2}^{\cc N}(k)\rightleftarrows Sp_{T^2}^{\Sigma}(k)$$
between ordinary and symmetric motivic $T$- or $T^2$-spectra are
well-known (see, e.g.,~\cite[4.31]{Jar}). The purpose of this section
is to establish Quillen equivalences between spectra having a further
structure given by various families of group schemes. Namely, we are now
in a position to formulate the main result of the paper which compares
ordinary/symmetric motivic spectra with GL-, SL-, Sp-, O- and
SO-motivic spectra respectively (cf.
Mandell--May--Schwede--Shipley~\cite[0.1]{MMSS}).

\begin{thm}[Comparison]\label{compar}
The following natural adjunctions between categories of $T$- and
$T^2$-spectra are all Quillen equivalences with respect to the
stable model structure of Definition~\ref{modelst}:
\begin{itemize}
\item[$(1)$]
   $Sp_{T}^{\cc N}(k)\rightleftarrows Sp_{T}^{\textrm{GL}}(k)$;
\item[$(2)$]
   $Sp_{T^2}^{\cc N}(k)\rightleftarrows Sp_{T^2}^{\textrm{SL}}(k)$;
\item[$(3)$]
   $Sp_{T^2}^{\cc N}(k)\rightleftarrows Sp_{T^2}^{\textrm{Sp}}(k)$;
\item[$(4)$]
   $Sp_{T^2}^{\cc N}(k)\rightleftarrows Sp_{T^2}^{\textrm{SO}}(k)\rightleftarrows Sp_{T^2}^{\textrm{O}}(k)$
if $\charr k\ne2$.
\end{itemize}
\end{thm}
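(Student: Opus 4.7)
The plan is to adapt the Mandell--May--Schwede--Shipley argument from~\cite[Section~10]{MMSS} to the motivic setting developed in Sections~\ref{tutu}--\ref{modelstr}. Since the classical Quillen equivalences $Sp_{K}^{\cc N}(k)\rightleftarrows Sp_{K}^{\Sigma}(k)$ for $K\in\{T,T^2\}$ are~\cite[4.31]{Jar}, it is enough to prove a Quillen equivalence on the symmetric side, $Sp_{K}^{\Sigma}(k)\rightleftarrows Sp_{K}^{\cc C}(k)$, for each $\cc C\in\{\textrm{GL},\textrm{SL},\textrm{Sp},\textrm{O},\textrm{SO}\}$ (with $\charr k\neq 2$ in the last two), and then compose. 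Proposition~\ref{adjuncst} already supplies the Quillen pair. The first observation is that $U$ preserves and reflects stable equivalences between stably fibrant objects: the image of $\iota\colon\Sigma\to\cc C$ exhausts the objects of $\cc C_{\cc S}$ in every case of the theorem, so that stable equivalences of stably fibrant $\cc C$-spectra are detected levelwise on exactly the same underlying motivic spaces as those of symmetric spectra. It therefore remains to verify that the derived unit of $L\dashv U$ is a stable equivalence on every cofibrant symmetric spectrum.

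By the cellularity and left properness of Corollary~\ref{uhuh}, together with closure of stable equivalences under filtered colimits, pushouts along cofibrations and retracts, this reduces to checking the unit on the generators. Concretely, for each generating cofibration $i\colon C\to D$ of $\bb M_\bullet$ and each $m\geq 0$ the unit takes the explicit form
\[
\eta_m(A)\colon F_m^\Sigma A\longrightarrow U F_m^{\cc C}A,\qquad A\in\{C,D\},
\]
whose level $n$ component (for $n\geq m$) is the canonical map of induced motivic spaces
\[
\Sigma_{n+}\wedge_{\Sigma_{n-m}}(A\wedge K^{n-m})\longrightarrow (G_n)_+\wedge_{G_{n-m}}(A\wedge K^{n-m})
\]
coming from the embedding of Examples~\ref{glsp}--\ref{sosp}, where $K=T$ or $T^2$ and $G_\bullet$ is the chosen family of groups. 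The task is then to prove each such $\eta_m(A)$ is a stable equivalence of $\Sigma$-spectra.

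The main obstacle is precisely this last step: it is the motivic analogue of the Stiefel-type stability statement underlying the topological argument of~\cite{MMSS}. I would attack it through the equivariant cofibre sequences controlled by the Stiefel-like homogeneous schemes $G_n/(\Sigma_m\times G_{n-m})$ and $\Sigma_n/(\Sigma_m\times\Sigma_{n-m})$, and show, using $\bb A^1$-cell decompositions of these quotients for the families $GL_n$, $SL_{2n}$, $Sp_{2n}$, $O_{2n}$, $SO_{2n}$, that the levelwise cofibres of $\eta_m(A)$ gain increasing motivic $K$-connectivity with $n$ and therefore die after $K$-stabilization. For part~(4) the hypothesis $\charr k\neq 2$ is used both to guarantee that $q_{2m}$ is non-degenerate (so that $O_{2m}$ and $SO_{2m}$ are smooth group schemes over $k$) and to ensure that $SO_{2m}\subset O_{2m}$ is of index $2$, whence the second Quillen equivalence $Sp_{T^2}^{\textrm{SO}}(k)\rightleftarrows Sp_{T^2}^{\textrm{O}}(k)$ reduces to a $\bb Z/2$-orbit analysis superimposed on the SL/Sp step already carried out.
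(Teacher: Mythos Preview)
Your outline points in the right direction—the heart of the argument is indeed that the derived unit on the free generators is a stable equivalence because the relevant homogeneous spaces become highly $\bb A^1$-connected—but your execution of the key step diverges from the paper's and contains some inaccuracies.

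First, the paper bypasses $\Sigma$ entirely and works directly with the adjunction $Sp_K^{\cc N}(k)\rightleftarrows Sp_K^{\cc C}(k)$ of Corollary~\ref{adjuncstcor}. This makes the unit maps much cleaner: after smashing with $T^n$ and using the equivariant identification~\cite[1.2]{GM} (namely $G_{q+}\wedge_{G_{q-n}} T^q\cong (G_q/G_{q-n})_+\wedge T^q$, valid because $G_q$ acts on $T^q$), the $q$th level of the unit $F_n^{\cc N}C\to UF_n^{\cc C}C$ becomes simply the inclusion $S^0\hookrightarrow (G_q/G_{q-n})_+$ smashed with $C\wedge T^q$. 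The cofibre $T$-spectrum then has $q$th space $(G_q/G_{q-n})\wedge C\wedge T^q$ (pointed at the identity coset), and the layer filtration together with the connectivity input below forces it to lie in $\bigcap_i SH(k)_{\geq i}$, hence vanish. A separate intermediate step (Proposition~\ref{weakeqs}) shows that $U$ detects stable equivalences on \emph{all} $\cc C$-spectra, not just fibrant ones, which is what allows one to pass from layers to colimits.

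The precise connectivity input the paper invokes is: $GL_n/GL_{n-1}\simeq\bb A^n\setminus 0$ is $\bb A^1$-$(n-2)$-connected by~\cite[2.1.3--2.1.4]{AFW}, and for the remaining families the successive quotients $SL_n/SL_{n-1}$, $Sp_{2m}/Sp_{2m-2}$, $O_{2n}/O_{2n-1}$, $SO_{2n}/SO_{2n-1}$ are isomorphic to the affine quadric $Q_{2n-1}\simeq\bb A^n\setminus 0$ by~\cite[2.13]{AHW3} and~\cite{ADF}. A fibre-sequence induction then gives the connectivity of $G_q/G_{q-n}$, and Proposition~\ref{connect}/Corollary~\ref{connectcor} converts $\bb A^1$-connectivity into biconnectivity.

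By contrast, the quotients $G_n/(\Sigma_m\times G_{n-m})$ and $\Sigma_n/(\Sigma_m\times\Sigma_{n-m})$ you name are not the ones that arise: there is no extra $\Sigma_m$ acting on $A\wedge K^{n-m}$. If you insist on the symmetric route, after the same untwisting you would be comparing $(\Sigma_n/\Sigma_{n-m})_+$ with $(G_n/G_{n-m})_+$, which only inserts a finite set of extra basepoints and can be handled, but is strictly more bookkeeping than the $\cc N$-route. No ``$\bb A^1$-cell decomposition'' is needed, and the separate $\bb Z/2$-orbit analysis you propose for $\textrm{SO}\subset\textrm{O}$ is unnecessary: the paper treats O and SO uniformly via the same quadric identification, with $\charr k\neq 2$ entering only to ensure the relevant torsors are Zariski locally trivial (Remark~\ref{remgh}).
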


We postpone its proof but first verify several statements which are
of independent interest. Recall that a motivic space $A\in\bb
M_\bullet$ is an {\it $\bb A^1$-$n$-connected\/} if the Nisnevich
sheaves $\pi_i^{\bb A^1}(A)\cong*$ for $i\leq n$. For any $B\in
SH(k)$, denote by $\pi_{i,n}^{\bb A^1}(B)$ the sheaf associated to
the presheaf
   $$U\in Sm/S\mapsto SH(k)(U_+\wedge S^{i-n}\wedge\gmpn,B).$$
$B$ is said to be {\it connected\/} if $\pi_{i,n}^{\bb A^1}(B)=0$ for $i<n$. We also set
   $$SH(k)_{\geq\ell}:=\Sigma^\ell_{S^1} SH(k)_{\geq 0}$$
and refer to the objects of $SH(k)_{\geq\ell}$ as {\it
$(\ell-1)$-connected}. We define the category of
$(\ell-1)$-connected $S^1$-spectra $SH_{S^1}(k)_{\geq\ell}$ in a
similar fashion. We say that a motivic space $A\in\bb M_\bullet$ is
{\it stably $(\ell-1)$-connected}, $\ell\geq 0$, if its suspension
$S^1$-spectrum is in $SH_{S^1}(k)_{\geq\ell}$ (i.e. all its negative
sheaves of stable homotopy groups are zero below $\ell$). Finally, a
motivic space $A\in\bb M_\bullet$ is {\it $(\ell-1)$-biconnected},
$\ell\geq 0$, if its suspension bispectrum (or its $\bb
P^1$-/$T$-spectrum) is in $SH(k)_{\geq\ell}$.

\begin{rem}
In the language of framed motives~\cite{GP1} if $A\in\bb
M_\bullet$ is $(\ell-1)$-biconnected and the base field is
(infinite) perfect then the framed motive $M_{fr}(A^c)$
(respectively the motivic space $C_*Fr(A^c)^{\textrm{gp}}$ with `gp' standing for group completion
of the sectionwise $H$-space $C_*Fr(A^c)$),
where $A^c$ is a cofibrant
resolution of $A$ in the projective model structure of spaces, is
locally $(\ell-1)$-connected as an $S^1$-spectrum (respectively as a
motivic space).
\end{rem}

It is well-known that the suspension bispectrum of a space is
connected. The following statement is a further extension of this
fact.\footnote{The author thanks A.~Ananyevskiy for pointing out a
helpful argument used in the proof of this proposition.}

\begin{prop}\label{connect}
Let $n>0$ and let $A\in\bb M_\bullet$ be an $\bb
A^1$-$(n-1)$-connected or stably $(n-1)$-connected pointed motivic
space. Then $A$ is $(n-1)$-biconnected.
\end{prop}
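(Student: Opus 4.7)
The plan is to reduce to the stable case and then to invoke Morel's stable $\bb A^1$-connectivity theorem in the naive $\gmp$-bispectrum model. First, if $A$ is $\bb A^1$-$(n-1)$-connected, Morel's stable $\bb A^1$-connectivity theorem yields $\Sigma^\infty_{S^1}A\in SH_{S^1}(k)_{\geq n}$, so $A$ is already stably $(n-1)$-connected. It therefore suffices to treat the weaker hypothesis that $A$ is stably $(n-1)$-connected.

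Model $\Sigma^\infty_{\bb P^1}A$ as a $\gmp$-spectrum in $S^1$-spectra with $q$-th level $\Sigma^\infty_{S^1}(A\wedge\gmp^{\wedge q})$ and identity bonding maps. Sheafifying the standard formula for the homotopy groups of such a bispectrum yields
$$\pi_{i,m}^{\bb A^1}(\Sigma^\infty_{\bb P^1}A)(U)=\colim_{q\geq 0}\bigl[U_+\wedge S^{i-m}\wedge\gmp^{\wedge(m+q)},\,A\wedge\gmp^{\wedge q}\bigr]_{SH_{S^1}(k)},$$
and the smash/internal-Hom adjunction rewrites each term of the colimit as $\bigl[U_+\wedge S^{i-m},\,\uhom(\gmp^{\wedge(m+q)},A\wedge\gmp^{\wedge q})\bigr]_{SH_{S^1}(k)}$.

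Now apply Morel's stable $\bb A^1$-connectivity theorem, which asserts that both endofunctors $-\wedge\gmp$ and $\Omega_\gmp=\uhom(\gmp,-)$ on $SH_{S^1}(k)$ preserve the subcategory $SH_{S^1}(k)_{\geq n}$. Iterating, $\uhom(\gmp^{\wedge(m+q)},A\wedge\gmp^{\wedge q})\in SH_{S^1}(k)_{\geq n}$ for every $q\geq 0$, so the above Hom-set vanishes whenever $i-m<n$. Passing to the colimit gives $\pi_{i,m}^{\bb A^1}(\Sigma^\infty_{\bb P^1}A)=0$ for $i<m+n$, which is precisely the condition $\Sigma^\infty_{\bb P^1}A\in SH(k)_{\geq n}$, i.e.\ $A$ is $(n-1)$-biconnected.

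The main obstacle is the appeal to Morel's theorem — in particular, the preservation of $(n-1)$-connectedness by the right adjoint $\Omega_\gmp$ is the genuinely deep input, whereas everything else is formal bookkeeping with the bispectrum model and the tensor/Hom adjunction.
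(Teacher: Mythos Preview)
Your proof is correct and follows essentially the same route as the paper's: both arguments come down to showing that each weight $S^1$-spectrum of the $\gmp$-stabilized bispectrum remains $(n-1)$-connected in $SH_{S^1}(k)$, with Morel's stable $\bb A^1$-connectivity theorem---and its consequence that $\Omega_{\gmp}$ preserves connectivity, which the paper cites as~\cite[A.2]{GP2}---as the decisive input. The only cosmetic difference is that the paper carries this out with explicit motivically fibrant models $B(q)$ and levelwise local connectivity, while you phrase the same computation directly in the homotopy category via the colimit formula for $\pi^{\bb A^1}_{i,m}$ and the smash/$\uhom$ adjunction.
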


\begin{proof}
Let $A^f$ be a motivically fibrant replacement of $A$. First observe
that the suspension $S^1$-spectrum $\Sigma^\infty_{S^1}A^f$ is
locally $(n-1)$-connected. Indeed, the zeroth space of the spectrum
is locally $(n-1)$-connected by assumption, and hence each $m$th
space $A\wedge S^m$ of the spectrum is locally $(m+n-1)$-connected.
Morel's stable $\bb A^1$-connectivity theorem~\cite{Mor} implies
$\Sigma^\infty_{S^1}(A^f)$ is motivically $(n-1)$-connected.

Since $A^f$ is locally $(n-1)$-connected by assumption, it follows
that each $S^1$-spectrum $\Sigma^\infty_{S^1}(A\wedge\bb G_m^{\wedge
q})$, $q\geq 0$, is locally $(n-1)$-connected. By Morel's stable
$\bb A^1$-connectivity theorem~\cite{Mor} each
$\Sigma^\infty_{S^1}(A^f\wedge\bb G_m^{\wedge q})$, $q\geq 0$, is
motivically $(n-1)$-connected. Let $B=(B(0),B(1),\ldots)$ denote a
level motivically fibrant replacement of the bispectrum of
$\Sigma^\infty_{\gmp}\Sigma^\infty_{S^1}A^f$. Then each weight
$S^1$-spectrum $B(q)$ is motivically fibrant and locally
$(n-1)$-connected.

Since $B$ is a levelwise motivically fibrant bispectrum, then its
stabilization in the $\gmp$-direction $\Theta^\infty_{\gmp}B$ is
motivically fibrant. We have that $\Theta^\infty_{\gmp}B$ is a
fibrant replacement of $\Sigma^\infty_{\gmp}\Sigma^\infty_{S^1}A$.
By definition, the $q$th weight $S^1$-spectrum
$\Theta^\infty_{\gmp}B(q)$ is the colimit of the sequence
   $$B(q)\to\Omega_{\gmp}B(q+1)\to\Omega_{\bb G_m^{\wedge 2}}B(q+2)\to\cdots$$
Since $B(q+i)$ is motivically fibrant locally $(n-1)$-connected
$S^1$-spectrum, then so is $\Omega_{\bb G_m^{\wedge i}}B(q+i)$
by~\cite[A.2]{GP2}. We see that each $\Theta^\infty_{\gmp}B(q)$ is
locally $(n-1)$-connected. Using~\cite[A.2]{GP2} this is enough to
conclude that $\Theta^\infty_{\gmp}B\in SH(k)_{\geq n}$, and hence
$\Sigma^\infty_{\gmp}\Sigma^\infty_{S^1}A\in SH(k)_{\geq n}$.

The proof for stably $(n-1)$-connected motivic spaces is similar to
that for $\bb A^1$-$(n-1)$-connected spaces.
\end{proof}

The proof of the preceding proposition also implies the following

\begin{cor}\label{connectcor}
Under the assumptions of Proposition~\ref{connect}
the space $A\wedge C$ is $(n-1)$-biconnected for any $C\in\bb M_{\bullet}$.
\end{cor}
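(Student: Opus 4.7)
The plan is to reduce to Proposition~\ref{connect} applied directly to the space $A \wedge C$; it therefore suffices to verify that $A \wedge C$ inherits the stable $(n-1)$-connectivity hypothesis from $A$.

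Let $A^f$ be a motivically fibrant replacement of $A$. The opening paragraph of the proof of Proposition~\ref{connect} establishes that under either form of the hypothesis on $A$, the suspension spectrum $\Sigma^\infty_{S^1} A^f$ is locally $(n-1)$-connected. Smashing with the pointed motivic space $C$ preserves local $(n-1)$-connectivity of $S^1$-spectra: for each $m \geq 0$ the space $A^f \wedge S^m$ is locally $(m+n-1)$-connected by assumption, and smashing with $C$ (which is locally $(-1)$-connected as a pointed simplicial sheaf) keeps the bound by the standard additivity estimate for connectivity of smash products. Hence $\Sigma^\infty_{S^1}(A^f \wedge C)$ is locally $(n-1)$-connected, and Morel's stable $\bb A^1$-connectivity theorem upgrades this to motivic $(n-1)$-connectivity. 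Since the flasque motivic smash product preserves motivic weak equivalences, the same conclusion holds for $A \wedge C$, so $A \wedge C$ is stably $(n-1)$-connected.

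With this verified, Proposition~\ref{connect} applied to $A \wedge C$ immediately yields the desired $(n-1)$-biconnectivity. The only delicate point is the smash connectivity bound used to pass from $A^f$ to $A^f \wedge C$, but this is the routine additivity property of connectivity for smash products of pointed motivic spaces and does not require any new input beyond what already appears in the proof of Proposition~\ref{connect}. In effect, the argument is that the proof of that proposition carries over verbatim once one replaces each occurrence of $\bb G_m^{\wedge q}$ by $C \wedge \bb G_m^{\wedge q}$, which is why the corollary is advertised as a direct consequence of the same proof.
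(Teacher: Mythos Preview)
Your proposal is correct and matches the paper's intent: the paper gives no separate proof, only the remark that the proof of Proposition~\ref{connect} also yields the corollary, and your final paragraph identifies precisely this mechanism (replace $\bb G_m^{\wedge q}$ by $C\wedge\bb G_m^{\wedge q}$ throughout). Your intermediate reformulation---first checking that $A\wedge C$ is stably $(n-1)$-connected via the smash connectivity estimate and then invoking Proposition~\ref{connect}---is a clean modular repackaging of the same argument rather than a genuinely different route.
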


The next result is crucial for proving Theorem~\ref{compar}.

\begin{thm}\label{compmaps}
Given a pointed motivic space $C\in\bb M_\bullet$, the following natural maps are all
stable motivic equivalences of ordinary motivic $T$- and $T^2$-spectra:
\begin{itemize}
\item[$(1)$] $\lambda_n:F_{n+1}(C\wedge T)\to F_nC$, where the shift desuspension
functors are defined by~\eqref{desgl} in Example~\ref{glsp} for GL-spectra;
\item[$(2)$] $\lambda_n:F_{2n+2}(C\wedge T^2)\to F_{2n}C$, where the shift desuspension
functors are defined by~\eqref{dessl} in Example~\ref{slsp} for SL-spectra;
\item[$(3)$] $\lambda_n:F_{2n+2}(C\wedge T^2)\to F_{2n}C$, where the shift desuspension
functors are defined by~\eqref{dessp} in Example~\ref{spsp} for symplectic spectra;
\item[$(4)$] $\lambda_n:F_{2n+2}(C\wedge T^2)\to F_{2n}C$, where the shift desuspension
functors are defined by~\eqref{deso} in Example~\ref{orthsp} for orthogonal spectra
provided that $\charr k\ne 2$;
\item[$(5)$] $\lambda_n:F_{2n+2}(C\wedge T^2)\to F_{2n}C$, where the shift desuspension
functors are defined by~\eqref{desso} in Example~\ref{sosp} for SO-spectra
provided that $\charr k\ne 2$.
\end{itemize}
\end{thm}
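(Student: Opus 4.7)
The plan is to show that the cofiber of each $\lambda_n$, viewed as a map of ordinary motivic spectra via the forgetful functor, has vanishing bigraded motivic stable homotopy sheaves, so that $\lambda_n$ itself is a stable motivic equivalence. I will outline case~(1) (GL) in detail; cases~(2)--(5) are entirely parallel after substituting the appropriate group-scheme family, $T^2$ in place of $T$, and (for~(4)--(5)) the restriction $\charr k\neq 2$ needed for the split-quadratic-form arguments.

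First, since in each of Examples~\ref{glsp}--\ref{sosp} the subgroup $G_{k-n}$ (and its subgroup $G_{k-n-1}$) acts trivially on the $C$-factor in the shift-desuspension formulas~\eqref{desgl}--\eqref{desso}, I would pull $C$ outside:
\begin{equation*}
(F_{n+1}(C\wedge K))(k)\;=\;C\wedge \bigl(G_{k+}\wedge_{G_{k-n-1}} K^{k-n}\bigr),\qquad (F_n C)(k)\;=\;C\wedge \bigl(G_{k+}\wedge_{G_{k-n}} K^{k-n}\bigr),
\end{equation*}
with $K=T$ in~(1) and $K=T^2$ in~(2)--(5). Then $\lambda_n$ at level $k$ is $C\wedge\rho_k$, where $\rho_k$ is the canonical quotient by the larger subgroup $G_{k-n}\supset G_{k-n-1}$.

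Second, I would analyse $\cof(\rho_k)$ via the canonical identification $G_{k+}\wedge_{G_{k-n-1}} K^{k-n}\cong G_{k+}\wedge_{G_{k-n}}\bigl((G_{k-n}/G_{k-n-1})_{+}\wedge K^{k-n}\bigr)$, valid because the shift-desuspension $G_{k-n-1}$-action on $K^{k-n}$ is the restriction of the natural $G_{k-n}$-action. Under this identification, $\rho_k$ is induced up from the collapse $(G_{k-n}/G_{k-n-1})_{+}\wedge K^{k-n}\to K^{k-n}$, so $\cof(\rho_k)$ is the induction to $G_k$ of $K^{k-n}$ smashed with a suspension of $G_{k-n}/G_{k-n-1}$. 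In each case the homogeneous space $G_{k-n}/G_{k-n-1}$ is $\bb A^1$-equivalent to a highly $\bb A^1$-connected scheme whose connectivity grows linearly in $k-n$: for GL, to $\bb A^{k-n}\setminus 0$ by transitivity on nonzero vectors; for Sp, to the hyperbolic-pair variety, an affine bundle over $\bb A^{2(k-n)}\setminus 0$; for SL, via an affine-bundle reduction to the 2-frame Stiefel variety $V_2(\bb A^{2(k-n)})$; for O and SO (under $\charr k\neq 2$), via an affine-bundle reduction to the smooth split quadric $\{q_{2(k-n)}=1\}\subset\bb A^{2(k-n)}$. Smashing with the highly biconnected $K^{k-n}$ and with $C$ (which preserves biconnectivity by Corollary~\ref{connectcor}), and then inducing up to $G_k$ (which does not decrease $\bb A^1$-connectivity, as $G_{k+}\wedge_{G_{k-n}}(-)$ is a bundle construction over the highly connected Stiefel base $G_k/G_{k-n}$), one concludes that $\cof(\lambda_n)(k)$ is $\bb A^1$-$(\ell(k)-1)$-connected where $\ell(k)-k\to\infty$. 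By Proposition~\ref{connect} this level is $(\ell(k)-1)$-biconnected; hence in the $k\to\infty$ colimit that defines the motivic bigraded stable homotopy sheaves of an ordinary $T$- or $T^2$-spectrum every bidegree vanishes, so $\cof(\lambda_n)\simeq 0$ stably and $\lambda_n$ is a stable motivic equivalence.

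\textbf{Main obstacle.} The principal work is the case-by-case identification of $G_{k-n}/G_{k-n-1}$ and the tracking of its $\bb A^1$-connectivity. For GL and Sp the classical transitivity arguments make this direct; SL demands an affine-bundle reduction to a 2-frame Stiefel variety; and the O, SO cases additionally rely on the split-quadratic-form framework of Examples~\ref{orthsp}--\ref{sosp}, crucially on $\charr k\neq 2$, without which neither the smoothness of the affine quadric $\{q_{2(k-n)}=1\}$ nor the embedding $\Sigma_n\hookrightarrow O_{2n}$ through $(\cc O_S,q_2)^{\oplus n}$ is available. A secondary subtlety is to verify, via an induction-restriction comparison, that the induction $G_{k+}\wedge_{G_{k-n}}(-)$ preserves the connectivity bound obtained on the fiber --- i.e.\ that the Stiefel base $G_k/G_{k-n}$ itself is biconnected to a degree at least comparable to that of the fiber, so the induced spectrum does not amplify low-degree stable homotopy.
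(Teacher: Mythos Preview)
Your overall strategy --- show the cofiber of $\lambda_n$ has levels whose $\bb A^1$-connectivity grows without bound, hence is stably trivial --- matches the paper's, as does the use of Proposition~\ref{connect} and Corollary~\ref{connectcor} at the end. The substantive difference is in how the induced-space construction $G_{k+}\wedge_{G_{k-n}}(-)$ is handled.

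The paper first smashes $\lambda_n$ with $T^n$ (legitimate since $T^n\wedge-$ reflects stable equivalences by~\cite[3.18]{Jar}). After this, the $q$-th level becomes $G_{q+}\wedge_{G_{q-n}}(C\wedge T^q)$, and the crucial observation is that the $G_{q-n}$-action on $T^q$ now \emph{extends to a $G_q$-action}. By the equivariant identity $G_+\wedge_H X\cong(G/H)_+\wedge X$ valid whenever the $H$-action on $X$ extends to $G$ (\cite[1.2]{GM}), the induced space factors as the smash product $(G_q/G_{q-n})_+\wedge C\wedge T^q$. The induction is thus eliminated entirely; one is left with the full Stiefel varieties $G_q/G_{q-n}$ (rather than the one-step quotients $G_{k-n}/G_{k-n-1}$ you use), whose $\bb A^1$-connectivity is bootstrapped from $G_m/G_{m-1}\simeq\bb A^m\setminus 0$ (or $Q_{2m-1}$ in the SL, Sp, O, SO cases) via the fibre sequences $G_{m+l-1}/G_{m-1}\to G_{m+k+l-1}/G_{m-1}\to G_{m+k+l-1}/G_{m+l-1}$. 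Corollary~\ref{connectcor} then applies directly to the product, and the layer filtration of the cofiber lands in $\bigcap_i SH(k)_{\geq i}=0$.

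Your route instead keeps the induction and asserts that $G_{k+}\wedge_{G_{k-n}}(-)$ preserves connectivity because it is ``a bundle construction over the Stiefel base''. This is the step that is not established. The associated unpointed construction $G_k\times_{G_{k-n}}Z\to G_k/G_{k-n}$ is indeed Zariski-locally trivial (since the torsor is, by Remark~\ref{remgh}), but to transfer $\bb A^1$-connectivity from fibre to total space you need this to be an $\bb A^1$-fibre sequence, and Nisnevich-local fibre sequences do not in general survive $\bb A^1$-localization without further hypotheses on the base or fibre; moreover here $Z$ is an arbitrary pointed motivic space (it involves $C$), not a scheme. You correctly flag this as a ``secondary subtlety'' but do not resolve it. The paper's $T^n$-smash trick is exactly what removes this obstacle --- at the modest cost of working with the larger homogeneous spaces $G_q/G_{q-n}$ rather than the one-step quotients --- and is the main simplification your argument is missing.
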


\begin{rem}\label{remgh}
If $G$ is a linear algebraic group over a field $k$, and $H$ is a
closed subgroup, then by $G/H$ we mean the unpointed Nisnevich sheaf
associated with the presheaf $U\mapsto G(U)/H(U)$. If $G$ and $H$
are smooth and all $H$-torsors are Zariski locally trivial, then the
sheaf $G/H$ is represented by a scheme (see~\cite[p.~275]{AF1}). If there is
no likelihood of confusion, we
shall denote the scheme by the same symbol $G/H$.
By~\cite[p.~275]{AF1} this happens, for example,
if $H=GL_n, SL_n$ or $Sp_{2n}$. In turn, if $\charr k\ne 2$ then it
is proved similarly to~\cite[3.1.9]{AHW2} that the torsors
$O_{2n+2k}\to O_{2n+2k}/O_{2n}$ and $SO_{2n+2k}\to
SO_{2n+2k}/SO_{2n}$, $k,n>0$, are Zariski locally trivial. Note that
the schemes $GL_{n+k}/GL_{n}$, $SL_{2n+2k}/SL_{2n}$,
$Sp_{2n+2k}/Sp_{2n}$, $O_{2n+2k}/O_{2n}$, $SO_{2n+2k}/SO_{2n}$,
$k,n>0$, are all smooth (see~\cite[2.3.1]{AHW2} and~\cite[p.~5]{AHW3}). For the latter two
we assume $\charr k\ne 2$. As mentioned above, they all represent the
corresponding quotient Nisnevich sheaves (see~\cite[2.3.1]{AHW2} as well).
\end{rem}

\begin{proof}[Proof of Theorem~\ref{compmaps}]
(1). This is the case of GL-motivic spectra. By definition
(see~\eqref{desgl}),
   $$(F_nC)(q)=(GL_{q})_{+}\wedge_{GL_{q-n}}(C\wedge T^{q-n}).$$
For $q\geq n+1$, $\lambda_n(q)$ is the canonical quotient map
   $$(GL_{q})_{+}\wedge_{GL_{q-n-1}}(C\wedge T\wedge T^{q-n-1})=(GL_{q})_{+}\wedge_{GL_{q-n-1}}(C\wedge T^{q-n})
   \to (GL_{q})_{+}\wedge_{GL_{q-n}}(C\wedge T^{q-n}).$$
Since $T^n\wedge-$ reflects stable motivic equivalences of ordinary
$T$-spectra by~\cite[3.18]{Jar}, our statement reduces to showing that
$T^n\wedge\lambda_n$ is a stable motivic equivalence in $Sp_T^{\cc
N}(k)$.

The map $T^n\wedge\lambda_n$ takes the form
   $$(GL_{q})_{+}\wedge_{GL_{q-n-1}}(C\wedge T^{q})\to (GL_{q})_{+}\wedge_{GL_{q-n}}(C\wedge T^{q})$$
Since $GL_q$ acts on $T^q$, it follows from~\cite[1.2]{GM} that the
latter map is isomorphic to the map
   $$\lambda'_q:(GL_{q}/{GL_{q-n-1}})_+\wedge C\wedge T^{q}\to (GL_{q}/{GL_{q-n}})_+\wedge C\wedge T^{q}.$$
Here $GL_{q}/{GL_{q-n-1}},GL_{q}/{GL_{q-n}}$ are smooth schemes of
Remark~\ref{remgh}. Set,
   \begin{multline*}
    F_{n+1}'(C\wedge T):=(*,\bl n\ldots,*,(GL_{n+1})_+\wedge C\wedge T^{n+1},(GL_{n+2}/{GL_{1}})_+\wedge C\wedge T^{n+2},\\
       (GL_{n+3}/{GL_{2}})_+\wedge C\wedge T^{n+3},\ldots)
   \end{multline*}
and
   \begin{multline*}
    F_{n}'(C):=(*,\bl{n-1}\ldots,*,(GL_{n})_{+}\wedge C\wedge T^n,(GL_{n+1}/GL_1)_+\wedge C\wedge T^{n+1},\\
    (GL_{n+2}/{GL_{2}})_+\wedge C\wedge T^{n+2},
       (GL_{n+3}/{GL_{3}})_+\wedge C\wedge T^{n+3},\ldots).
    \end{multline*}
\normalsize The structure of $T$-spectra on $F_{n+1}'(C\wedge T)$ and $F_{n}'(C)$
are obvious. It is induced by the action of $T$ on the right.

Consider a commutative diagram of ordinary motivic $T$-spectra
   $$\xymatrix{sh^{-n-1}(\Sigma_T^\infty (C\wedge T\wedge T^n))\ar[d]\ar[r]^(.6)\alpha&F_{n+1}'(C\wedge T)\ar[d]^{\lambda'}\\
               sh^{-n}(\Sigma_T^\infty (C\wedge T^n))\ar[r]_{\beta}&F_n'(C),}$$
where $sh^{-n}(\Sigma_T^\infty(C\wedge
T^n))=(*,\bl{n-1}\ldots,*,C\wedge T^n,C\wedge T^{n+1},\ldots)$ is
the $(-n)$th shift of $\Sigma_T^\infty C$, $\alpha,\beta$ are
induced by the the following injective maps in $\bb M_\bullet$:
   $$S^0\to (GL_{q}/{GL_{q-n-1}})_+,\quad S^0\to (GL_{q}/{GL_{q-n}})_+.$$
They send the basepoint of $S^0$ to $+$ and the unbasepoint to
$GL_{q-n-1}$ and $GL_{q-n}$ respectively. Note that the left
vertical arrow is a stable motivic equivalence in $Sp^{\cc N}_T(k)$.
Observe that $\alpha$ and $\beta$ are isomorphic to counit andjunction maps
   $$T^n\wedge F_{n+1}^{\cc N}(C\wedge T)\to T^n\wedge F_{n+1}^{GL}(C\wedge T),\quad
       T^n\wedge F_{n}^{\cc N}(C)\to T^n\wedge F_{n}^{GL}(C).$$
To show that $T^n\wedge\lambda_n$ is a stable motivic equivalence it
is enough to show that $\alpha$ and $\beta$ are stable motivic equivalences.

The map $\alpha$ fits in a level cofiber sequence of $T$-spectra
   $$sh^{-n-1}(\Sigma_T^\infty (C\wedge T\wedge T^n))\hookrightarrow F_{n+1}'(C\wedge T)\to F_{n+1}''(C\wedge T),$$
where $F_{n+1}''(C\wedge T)$ is the bispectrum
   $$(*,\bl n\ldots,*,GL_{n+1}\wedge C\wedge T^{n+1},(GL_{n+2}/{GL_{1}})\wedge C\wedge T^{n+2},
       (GL_{n+3}/{GL_{2}})\wedge C\wedge T^{n+3},\ldots),$$
where $GL_{n+1}$ is pointed at the identity matrix and
$GL_{n+i}/{GL_{i-1}}$ is pointed at $GL_{i-1}$.

We claim that $F_{n+1}''(C\wedge T)$ is isomorphic to zero in
$SH(k)$. This is equivalent to saying that
   $$F''(C\wedge T):=(GL_{n+1}\wedge C,(GL_{n+2}/{GL_{1}})\wedge C\wedge T,
       (GL_{n+3}/{GL_{2}})\wedge C\wedge T^{2},\ldots)$$
is isomorphic to zero in $SH(k)$ (we use here~\cite[3.18]{Jar}).
Every $T$-spectrum $E=(E_0,E_1,\ldots)$ has the layer filtration
$E=\colim_iL_iE$ with $L_nE=(E_0,\ldots,E_{i-1},E_i,E_i\wedge
T,E_i\wedge T^2,\ldots)$. The $i$th layer $L_iF''(C\wedge T)$ of
$F''(C\wedge T)$ is isomorphic to
$\Sigma^\infty_T((GL_{n+1+i}/GL_i)\wedge C)$ in $SH(k)$.

By the proof of~\cite[2.1.3]{AFW} the ``projection onto the first
column map" $GL_n/GL_{n-1}\to\bb A^n\setminus 0$ is a motivic
equivalence of spaces.
It follows from~\cite[2.1.4]{AFW} that $\bb A^n\setminus 0$ is $\bb
A^1$-$(n-2)$-connected for $n\geq 2$, and hence so is
$GL_n/GL_{n-1}$. If we consider a fibre sequence of motivic spaces
   $$GL_{n+l-1}/GL_{n-1}\to GL_{n+k+l-1}/GL_{n-1}\to GL_{n+k+l-1}/GL_{n+l-1},\quad k,l\geq 0,$$
we conclude by induction that $GL_{n+k}/GL_{n-1}$ is $\bb
A^1$-$(n-2)$-connected as well for $n\geq 2$.

By Corollary~\ref{connectcor}
$\Sigma^\infty_T((GL_{n+i+1}/GL_{i})\wedge C)\in SH(k)_{\geq i-1}$
if $i\geq 2$, and hence $L_iF''(C\wedge T)\in SH(k)_{\geq i-1}$. We
see that $F''(C\wedge T)\in\bigcap_{i\in\bb N} SH(k)_{\geq i}$. This
is only possible when $F''(C\wedge T)\cong 0$ in $SH(k)$, and our
claim follows. Thus $\alpha$ is a stable equivalence, because its
cofiber $F_{n+1}''(C\wedge T)$ is zero in $SH(k)$. Using the same
arguments, $\beta$ is a stable equivalence as well, and hence so is
$\lambda_n$ as stated.

The proof of (2), (3), (4), (5) literally repeats that of (1) if we
use~\cite[2.13]{AHW3} saying that $SL_n/SL_{n-1}$,
$Sp_{2m}/Sp_{2m-2}$ (with $n=2m$) are isomorphic to the
odd-dimensional motivic sphere $Q_{2n-1}$ which is, by definition,
the affine quadric defined by the equation $\sum_{i=1}^n x_iy_i=1$.
By~\cite[p.~1892]{ADF} $Q_{2n-1}$ is $\bb A^1$-equivalent to $\bb
A^n\setminus 0$, and hence it is $\bb A^1$-$(n-2)$-connected for
$n\geq 2$ by~\cite[2.1.4]{AFW}. Likewise, $SO_{2n+1}/SO_{2n}$
(hence $O_{2n+1}/O_{2n}$ as well) is isomorphic to $Q_{2n}$ by the proof
of~\cite[2.15]{AHW3}, where $Q_{2n}$ is the affine quadric defined by the
equation $\sum_{i=1}^n x_iy_i=z(1-z)$. Since $Q_{2n}$ is $\bb A^1$-equivalent to
$S^n\wedge\bb G_m^{\wedge n}$ by~\cite[Theorem~2]{ADF}, it is
$\bb A^1$-$(n-1)$-connected for $n\geq 1$. Also, $SO_{2n}/SO_{2n-1}$
(hence $O_{2n}/O_{2n-1}$ as well) is isomorphic to $Q_{2n-1}$ by~\cite[2.13]{AHW3},
and so it is $\bb A^1$-$(n-2)$-connected for $n\geq 2$.
\end{proof}

We shall need the following useful fact.

\begin{prop}\label{weakeqs}
Let $G\in\{\textrm{GL,SL,Sp,O,SO}\}$. A map $f:X\to Y$ of $G$-spectra in
the sense of Examples~\ref{glsp}-\ref{sosp} is a stable equivalence
in the sense of Definition~\ref{modelst} if and only if it is a
stable motivic equivalence of ordinary motivic spectra.
\end{prop}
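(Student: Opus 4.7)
The plan is to handle the two implications separately, with Theorem~\ref{compmaps} serving as the crucial input throughout. Let $U: [\cc C_{\cc S}, \bb M_\bullet] \to [\cc N_{\cc S}, \bb M_\bullet]$ denote restriction along the canonical $\bb M_\bullet$-functor $\iota_{\cc S}: \cc N_{\cc S} \to \cc C_{\cc S}$, so that $U$ is the right adjoint of the Quillen adjunction of Corollary~\ref{adjuncstcor}.

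For the implication that every $G$-stable equivalence restricts to a stable motivic equivalence of $\cc N$-spectra, I would introduce the class $\cc W$ of maps $f$ in $[\cc C_{\cc S}, \bb M_\bullet]$ such that $Uf$ is a stable motivic equivalence of ordinary $\cc N$-spectra, and argue via the saturation description of left Bousfield localization. The stable equivalences at $\cc P^G = \{\lambda_n^G\}$ form the smallest class containing the pointwise motivic equivalences and $\cc P^G$ which is closed under 2-out-of-3, retracts, and cobase changes along pointwise cofibrations and transfinite compositions thereof. Now $\cc W$ visibly contains pointwise equivalences, contains $\cc P^G$ by Theorem~\ref{compmaps}, and inherits the remaining closure properties from those of stable motivic equivalences in $[\cc N_{\cc S}, \bb M_\bullet]$ (itself left proper by Corollary~\ref{uhuh}), since $U$ preserves compositions, retracts, arbitrary colimits (being a restriction functor), and pointwise cofibrations. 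Therefore $\cc W$ contains every stable equivalence of $G$-spectra.

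For the converse direction I would pass to stable fibrant replacements. Using Corollary~\ref{uhuh} together with the general theory of left Bousfield localizations of proper cellular model categories, choose trivial cofibrations $\eta_X: X \to \tilde X$ and $\eta_Y: Y \to \tilde Y$ in the stable model structure with $\tilde X, \tilde Y$ stably fibrant, and lift $f$ to $\tilde f: \tilde X \to \tilde Y$. Since stably fibrant $G$-spectra are exactly the pointwise fibrant $\Omega$-$G$-spectra (the $\cc P^G$-local condition translates via adjunction into $\tilde X(n) \to \Omega_K \tilde X(n+1)$ being a motivic equivalence), the images $U\tilde X, U\tilde Y$ are stably fibrant $\cc N$-spectra. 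The direction already proved shows $U\eta_X, U\eta_Y$ are stable equivalences of $\cc N$-spectra, so 2-out-of-3 on $Uf$ forces $U\tilde f$ to be a stable equivalence; between stably fibrant objects this means $U\tilde f$ is a pointwise motivic equivalence. Pointwise equivalence being a levelwise condition on underlying motivic spaces, it follows that $\tilde f$ itself is a pointwise equivalence of $G$-spectra, and hence $f$ is a stable equivalence of $G$-spectra.

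The main obstacle is verifying the closure of $\cc W$ under cobase changes along pointwise cofibrations in the first direction, which reduces to showing that $U$ sends pointwise cofibrations of $G$-spectra to pointwise cofibrations of $\cc N$-spectra. By cellular induction this reduces in turn to the generating pointwise cofibrations $F_n^G(i)$, whose level-$m$ component is $G_{m+} \wedge_{G_{m-n}} K^{m-n} \wedge i$; here the cofibrancy of the pointed motivic sheaf $(G_m/G_{m-n})_+ \wedge K^{m-n}$ together with the monoidal model axioms for $\bb M_\bullet$ handle the levelwise check, and a direct bookkeeping on the explicit group-theoretic structure maps induced by the embeddings $G_m \hookrightarrow G_{m+1}$ yields the iterated cofibration condition characterizing cofibrations of $\cc N$-spectra.
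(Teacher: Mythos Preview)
Your argument has two genuine gaps.

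First, the saturation description of stable equivalences in the first direction is asserted without proof. The claim that the $\cc P^G$-local equivalences form the \emph{smallest} class containing the pointwise equivalences and $\cc P^G$ that is closed under 2-out-of-3, retracts, cobase change along pointwise cofibrations, and transfinite composition is not a standard fact of Bousfield localization theory. Local equivalences are defined via mapping spaces into local objects, and nothing a priori guarantees that an arbitrary local equivalence can be assembled from $\cc P^G$ and the original equivalences by those operations. What \emph{is} true---and is what the paper actually uses, citing~\cite[4.2]{H}---is that fibrations with fibrant codomain are detected by the explicit set $\Lambda(\cc P^G)$ of horns; this lets one construct \emph{fibrant replacements} as $\Lambda(\cc P^G)$-cell complexes, which suffices for the 2-out-of-3 argument you run in the second direction, without ever needing the full saturation statement.

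Second, your identification of the level-$m$ component of $F_n^G(C)$ with $(G_m/G_{m-n})_+ \wedge K^{m-n} \wedge C$ is unjustified. The space in question is $G_{m+} \wedge_{G_{m-n}} (C \wedge K^{m-n})$, and the untwisting isomorphism of~\cite[1.2]{GM} applies only when the $G_{m-n}$-action on the second factor extends to a $G_m$-action. Here it does not: $G_m$ does not act on $K^{m-n}$. This is precisely why the paper first smashes with $K^n$ (so that $K^{m-n}$ becomes $K^m$, on which $G_m$ \emph{does} act) before invoking~\cite[1.2]{GM}. Without this step the cofibrancy claim in your ``main obstacle'' paragraph has no foundation, and with it the closure of $\cc W$ under cobase change is unproven.

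The paper's route sidesteps both problems. It shows directly that each map in $\Lambda(\cc P^G)$ becomes, after applying $U$, an \emph{injective} stable motivic equivalence in $Sp_T^{\cc N}(k)$ (via Theorem~\ref{compmaps} and the $K^n$-smash trick), and then uses Jardine's mapping-space characterization~\cite[2.12]{Jar} to see that pushouts of injective stable motivic equivalences are again stable motivic equivalences---so there is no need for $U$ to preserve flasque cofibrations at all. The small object argument with $\Lambda(\cc P^G)$ then yields a fibrant-replacement square whose horizontal maps are sent by $U$ to stable motivic equivalences, and both implications follow by 2-out-of-3 exactly as in your second direction.
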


\begin{proof}
We prove the statement for GL-motivic $T$-spectra, because the proof
for the other cases is similar. Denote by
   $$\wt{\cc P}^{\cc N}:=\{cyl(\lambda_n:F_{n+1}^{\cc N}(C\wedge T)\to F_{n}^{\cc N}T)\mid\lambda_n\in\cc P\},$$
where $cyl$ refers to the ordinary mapping cylinder map, $\cc P$ is
the family of Definition~\ref{modelst} corresponding to ordinary
$T$-spectra. Similarly, set
   $$\wt{\cc P}^{GL}:=\{cyl(\lambda_n:F_{n+1}^{\textrm{GL}}(C\wedge T)\to F_{n}^{\textrm{GL}}T)\mid\lambda_n\in\cc P\},$$
where $\cc P$ is the family of Definition~\ref{modelst}
corresponding to GL-spectra. Then $\wt{\cc P}^{\cc N}$ (respectively
$\wt{\cc P}^{GL}$) is a family of cofibrations in $Sp_T^{\cc N}(k)$
(respectively in $Sp_T^{\textrm{GL}}(k)$) with respect to the stable
model structure. Also, the left Kan extension functor of
Corollary~\ref{adjunc}
   $$L:Sp_T^{\cc N}(k)\to Sp_T^{\textrm{GL}}(k)$$
takes $\wt{\cc P}^{\cc N}$ to $\wt{\cc P}^{GL}$.

The proof of Theorem~\ref{compmaps} shows that the commutative
square in $Sp_T^{\cc N}(k)$
   $$\xymatrix{F_{n+1}^{\cc N}(C\wedge T)\ar[d]_{\alpha}\ar[r]^{\lambda_n}&F_{n}^{\cc N}C\ar[d]^{\beta}\\
               F_{n+1}^{\textrm{GL}}(C\wedge T)\ar[r]^{\lambda_n}&F_{n}^{\textrm{GL}}C}$$
with vertical maps being the counit maps consists of stable motivic
equivalences. Since the cylinder maps are preserved by the forgetful functor
   $$U:Sp_T^{\textrm{GL}}(k)\to Sp_T^{\cc N}(k),$$
it follows that $U(\wt{\cc P}^{GL})$ is a family of injective stable
motivic equivalences.

Let $J$ be a family of generating trivial flasque
cofibrations~\cite[3.2(b)]{Is} for $\bb M_\bullet$.
By~\cite[3.10]{Is} domains and codomains of the maps in $J$ are
finitely presentable. Recall that the set of maps in $Sp_T^{\cc
N}(k)$ (respectively in $Sp_T^{\textrm{GL}}(k)$)
   $$\cc P_J^{\cc N}:=\bigcup_{n\geq 0}F_n^{\cc
   N}(J)\quad(\textrm{respectively}\ \cc P_J^{\textrm{GL}}:=\bigcup_{n\geq 0}F_n^{\textrm{GL}}(J))$$
is a family of generating trivial cofibrations for the pointwise
model structure of Proposition~\ref{pmodel} (see, e.g., the proof
of~\cite[4.2]{DRO}). By construction, $L(\cc P_J^{\cc N})=\cc
P_J^{\textrm{GL}}$.

We set
   $$\wh{\cc P^{\textrm{GL}}}:=\{A\wedge\Delta[n]_+\sqcup_{A\wedge\partial\Delta[n]_+}B\wedge\partial\Delta[n]_+
     \to B\wedge\Delta[n]_+\mid(A\to B)\in\wt{\cc P}^{\textrm{GL}},n\geq 0\}.$$
An augmented family of $\wt{\cc P}^{\textrm{GL}}$-horns is the
following family of trivial cofibrations:
   $$\Lambda({\cc P}^{\textrm{GL}})=\cc P_J^{\textrm{GL}}\cup\wh{{\cc P}^{\textrm{GL}}}.$$
Observe that domains and codomains of the maps in $\Lambda({\cc P}^{\textrm{GL}})$
are finitely presentable. It can be proven similarly
to~\cite[4.2]{H} that a map $f:A\to B$ is a fibration in the stable
model structure with fibrant codomain if and only if it has the
right lifting property with respect to $\Lambda({\cc
P}^{\textrm{GL}})$.

By~\cite[2.12]{Jar} a map $f:X\to Y$ in $Sp_T^{\cc N}(k)$ is a
stable motivic equivalence if and only if it induces a weak
equivalence $f^*:\Map_*(Y,W)\to\Map_*(X,W)$ of Kan complexes for all
stably fibrant injective $T$-spectra $W$. It follows that a pushout
of an injective stable motivic equivalence is an injective stable
motivic equivalence. Since all colimits in $Sp_T^{\textrm{GL}}(k)$
are computed in $Sp_T^{\cc N}(k)$, it follows that a pushout of a
coproduct of maps from $\Lambda({\cc P}^{\textrm{GL}})$ computed in
$Sp_T^{\textrm{GL}}(k)$ is a stable motivic equivalence in
$Sp_T^{\cc N}(k)$, because every map of $\Lambda({\cc
P}^{\textrm{GL}})$ is an injective stable motivic equivalence in
$Sp_T^{\cc N}(k)$. In particular, $U$ sends $\Lambda({\cc
P}^{\textrm{GL}})$-cell complexes to stable motivic equivalence in
$Sp_T^{\cc N}(k)$.

We now apply the small object argument to the family $\Lambda({\cc
P}^{\textrm{GL}})$ in order to fit $f:X\to Y$ of the proposition
into a commutative diagram
   $$\xymatrix{U(X)\ar[r]\ar[d]_{U(f)}&U(L_{\cc P}X)\ar[d]^{U(L_{\cc P}f)}\\
               U(Y)\ar[r]&U(L_{\cc P}Y)}$$
with $X\to L_{\cc P}X$, $Y\to L_{\cc P}Y$ being $\Lambda({\cc
P}^{\textrm{GL}})$-cell complexes and $L_{\cc P}X,L_{\cc P}Y$ stably
fibrant GL-spectra (hence stably fibrant ordinary spectra by
Corollary~\ref{adjuncstcor}). Notice that $L_{\cc P}f$ is a level
motivic equivalence. Our statement now follows.
\end{proof}

\begin{proof}[Proof of Theorem~\ref{compar}]
We only prove that
   $$L:Sp_{T}^{\cc N}(k)\rightleftarrows Sp_{T}^{\textrm{GL}}(k):U$$
is a Quillen equivalence with respect to the stable model structure,
because the other cases are proved in a similar fashion.

The proof of Theorem~\ref{compmaps} shows that the counit map
$\beta_n:F_n^{\cc N}(C)\to U(F_n^{\textrm{GL}}(C))$ is a stable
motivic equivalence in $Sp_T^{\cc N}(k)$ for any $C\in\bb
M_\bullet$. Suppose $E\in Sp_T^{\cc N}(k)$ is cofibrant. Present it
as $E=\colim_n L_n^{\cc N}(E)$, where each layer $L_k^{\cc
N}(E)=(E_0,\ldots,E_{n-1},E_n,E_n\wedge T,\ldots)$ is cofibrant as
well. The canonical map $\phi_n:F_n^{\cc N}(E_n)\to L_n^{\cc N}(E)$
is a stable motivic equivalence of cofibrant objects.

Denote by $L_n^{\textrm{GL}}(E):=L(L_n^{\cc N}(E))$. Then
$L(E)=\colim_n L_n^{\textrm{GL}}(E)$, because $L$ preserves
colimits. By Corollary~\ref{adjuncstcor} $L$ is a left Quillen
functor, and hence $L(\phi_n):F_n^{\textrm{GL}}(E_n)\to
L_n^{\textrm{GL}}(E)$ is a stable equivalence in
$Sp_T^{\textrm{GL}}(k)$ by~\cite[1.1.12]{Hov}. By Proposition~\ref{weakeqs} $UL(\phi_n)$
is a stable motivic equivalence in $Sp_T^{\cc N}(k)$. Consider a
commutative square
   $$\xymatrix{F_n^{\cc N}(E_n)\ar[d]_{\phi_n}\ar[r]^{\beta_n}&U(F_n^{\textrm{GL}}(E_n))\ar[d]^{UL(\phi_n)}\\
               L_n^{\cc N}(E)\ar[r]_{\gamma_n}&U(L_n^{\textrm{GL}}(E))}$$
with $\gamma_n$ the counit map. Since $\beta_n,\phi_n,UL(\phi_n)$
are stable motivic equivalences, then so is $\gamma_n$. It follows
from~\cite[3.5]{DRO} that $\gamma:=\colim_n\gamma_n:E=\colim_n
L_n^{\cc N}(E)\to UL(E)=\colim_n U(L_n^{\textrm{GL}}(E))$ is a
stable motivic equivalence, because $Sp_T^{\cc N}(k)$ is a weakly
finitely generated model category.

Let $\delta:L(E)\to RL(E)$ be a fibrant resolution of $L(E)$ in
$Sp_T^{\textrm{GL}}(k)$. Then $U(\delta)$ is a stable motivic
equivalence in $Sp_T^{\cc N}(k)$ by Proposition~\ref{weakeqs}. We
see that the composition
$E\xrightarrow{\gamma}UL(E)\xrightarrow{U(\delta)}U(RL(E))$ is a
stable motivic equivalence for any cofibrant $E\in Sp_T^{\cc N}(k)$.
Since $U$ plainly reflects stable equivalences between fibrant
GL-spectra, $(L,U)$ is a Quillen equivalence by~\cite[1.3.16]{Hov}.
This completes the proof of the theorem.
\end{proof}

We discuss an application of Theorem~\ref{compar} in the next
section concerning the localization functor $C_*\cc Fr$
of~\cite{GP5}.

\section{On the localization functor $C_*\cc Fr$}\label{locfun}

Throughout this section $k$ is an (infinite) perfect
field. As usual, we assume $\charr k\not=2$ whenever we deal with
orthogonal or special orthogonal motivic spectra. Recall that $SH_{\nis}(k)$
is the triangulated category obtained from
the local stable homotopy category of sheaves of $S^1$-spectra
$SH_{S^1}^{\nis}(k)$ by stabilizing $SH_{S^1}^{\nis}(k)$ with
respect to the endofunctor $\gmp\wedge-$.

Let $\cc T$ be a triangulated category. Following~\cite{AJS}, we define a localization in
$\cc T$ as a triangulated endofunctor $L:\cc T\to\cc T$ together
with a natural transformation $\eta:\id\to L$ such that
$L\eta_X=\eta_{LX}$ for any $X$ in $\cc T$ and $\eta$ induces an
isomorphism $LX\cong LLX$. We refer to $L$ as a {\it localization
functor in $\cc T$}. Such a localization functor determines a full
subcategory $\kr L$ whose objects are those $X$ such that $LX=0$.
An object $X\in\cc T$ is said to be {\it $L$-local\/} if $\eta_X:X\to LX$
is an isomorphism.

The computation of localization functors and their full subcategories
of local objects is enormously hard in practice. In particular, if
$\cc T=SH_{\nis}(k)$ and $\cc S$ is the full subcategory of $SH_{\nis}(k)$ compactly generated
by the shifted cones of the arrows
$pr_X:\Sigma^\infty_{S^1}\Sigma^\infty_{\gmp}(X\times\bb
A^1)_+\to\Sigma^\infty_{S^1}\Sigma^\infty_{\gmp}X_+$, $X\in Sm/k$, then the Bousfield
localization theory in compactly generated triangulated categories says that
there exists a localisation functor
   $$L_{\bb A^1}:SH_{\nis}(k)\to SH_{\nis}(k)$$
such that $\cc S=\kr L_{\bb A^1}$.
By definition, the Morel--Voevodsky stable motivic
homotopy category $SH(k)$ is the quotient category $SH_{\nis}(k)/\cc S$.

A new approach to
the classical stable homotopy theory $SH(k)$ of Morel--Voevodsky~\cite{MV} was
suggested in~\cite{GP5}. This approach has nothing to do with any kind of
motivic equivalences and is briefly defined as follows. There exists an
explicit localization functor
   $$C_*\cc Fr:SH_{\nis}(k)\to SH_{\nis}(k)$$
that first takes a bispectrum $E$ to its naive projective cofibrant
resolution $E^c$ and then one sets in each bidegree $C_*\cc
Fr(E)_{i,j}:=C_*Fr(E^c_{i,j})$ (we refer the reader to~\cite{GP1}
for the definition of $C_*Fr(\cc X)$, $\cc X\in\bb M_\bullet$). We
should note that the localization functor $C_*\cc Fr$ is isomorphic
to the big framed motives localization functor $\cc M^b_{fr}$
of~\cite{GP1} (see~\cite{GP5} as well). We then define $SH^{new}(k)$
as the category of $C_*\cc Fr$-local objects in $SH_{\nis}(k)$.
By~\cite[Section~2]{GP5} $SH^{new}(k)$ is canonically equivalent to
Morel--Voevodsky's $SH(k)$.

The localization functor $C_*\cc Fr$ is also of great utility when
dealing with another model for $SH(k)$, constructed in~\cite{GP5}.
This model recovers all motivic bispectra as certain covariant
functors on $Fr_0(k)$ taking values in $\bb A^1$-local framed
$S^1$-spectra. In particular, this model of $SH(k)$ implies that
$\pi_{i,j}^{\bb A^1}(E)$-s have more information than just the naive
bigraded sheaves. Namely, they are recovered from certain covariant
functors $\pi_{i}^{fr}(E)$ on $Fr_0(k)$ taking values in strictly
$\bb A^1$-invariant framed sheaves. Thus the functors
$\pi_{i}^{fr}(E)$ have one index only corresponding to the
$S^1$-direction (in this way we get rid of the second index).
These are reminiscent of the classical stable homotopy
groups of ordinary $S^1$-spectra.
It is therefore useful to think of the $\pi_{i,j}^{\bb A^1}(E)$ as
the richer information ``$\pi_{i}^{fr}(E)$".

Theorems~\ref{compar} and~\ref{compmaps} give rise to an equivalent
model for the localization functor $C_*\cc Fr$ (see below). It
involves smooth algebraic varieties of the form $G_{n+k}/G_n$, where
$G_n$, $n\geq 0$, is $GL_n$, $SL_{2n}$, $Sp_{2n}$, $O_{2n}$ or $SO_{2n}$.
Below we shall write $\cc G$ to denote the family $\{G_n\}_{n\geq
0}$. In this paper $\cc G$ is $\{GL_n\}_{n\geq 0}$, $\{SL_{2n}\}_{n\geq
0}$, $\{Sp_{2n}\}_{n\geq 0}$, $\{O_{2n}\}_{n\geq 0}$ or $\{SO_{2n}\}_{n\geq
0}$.

\begin{dfn}\label{deffrmot}
Let $\cc G=\{G_k\}_{k\geq 0}$ be a family as above, $n\geq 0$ and
$\cc X\in\bb M_\bullet$. If $\cc G=\{GL_k\}_{k\geq 0}$ define
   $$Fr^{\cc G,n}(\cc X):=\colim_{q\geq n}\uhom_{\bb M_\bullet}(\bb P^{\wedge q},
     \cc X\wedge(G_q/G_{q-n})_+\wedge T^q).$$
In other words, if we consider the $\bb P^1$-spectrum
   $$\cc Y:=(*,\bl{n-1}\ldots,*,\cc X\wedge (G_{n})_{+}\wedge T^n,\cc X\wedge(G_{n+1}/G_{1})_+\wedge
     T^{n+1},\cc X\wedge(G_{n+2}/G_{2})_+\wedge T^{n+1},\ldots)$$
then $Fr^{\cc G,n}(\cc X)$ equals the $0$th space of the spectrum
$\Theta^\infty_{\bb P^1}(\cc Y)$. Notice that $G_{n+k}/G_n$-s
incorporated into the definition are all smooth algebraic varieties. In turn,
if $\cc G$ is $\{SL_{2k}\}_{k\geq 0}$, $\{Sp_{2k}\}_{k\geq 0}$,
$\{O_{2k}\}_{k\geq 0}$ or $\{SO_{2k}\}_{k\geq 0}$ and $n\geq 0$ is even,
then $Fr^{\cc G,n}(\cc X)$ is defined as above if we take the colimit over even $q$-s.

Using the terminology of~\cite{GP1}, we define the {\it $(\cc
G,n)$-framed motive $M_{fr}^{\cc G,n}(\cc X)$ of $\cc X$\/} as the
Segal $S^1$-spectrum associated with the (sectionwise) $\Gamma$-space
$m\in\Gamma^{\op}\mapsto C_*Fr^{\cc G,n}(\cc X\wedge m_+)$,
where $C_*$ stands for the Suslin complex.

If we want to specify the choice of groups, we write below
$C_*Fr^{GL,n}(\cc X)$, $C_*Fr^{SL,2n}(\cc X)$, $C_*Fr^{Sp,2n}(\cc X)$,
$C_*Fr^{O,2n}(\cc X)$, and $C_*Fr^{SO,2n}(\cc X)$ (respectively, we write
$M_{fr}^{GL,n}(\cc X)$, $M_{fr}^{SL,2n}(\cc X)$, $M_{fr}^{Sp,2n}(\cc
X)$, $M_{fr}^{O,2n}(\cc X)$, and $M_{fr}^{SO,2n}(\cc X)$).
\end{dfn}

Let $\Delta^{\op} Fr_0(k)$ be the category of simplicial
objects in $Fr_0(k)$. There is an obvious fully faithful functor $spc:
Fr_0(k) \to Shv_\bullet(Sm/k)$ sending an object $X\in
Fr_0(k)$ to the Nisnevich sheaf $X_+$. It induces a fully faithful functor
   $$spc:\Delta^{\op} Fr_0(k)\to sShv_\bullet(Sm/k),$$
taking an object $[n]\mapsto Y_n$ to the simplicial Nisnevich
sheaf $[n]\mapsto (Y_n)_+$. Denote the image of this functor by
$\cc T$. Also, we shall write $\bl{\to}{\cc T}$ to denote the
motivic spaces which are filtered colimits of objects in $\cc T$
coming from filtered diagrams in $\Delta^{\op} Fr_0(k)$ under
the functor $spc$.

\begin{thm}\label{frmotgn}
Suppose $\cc X\in\bl{\to}{\cc T}$. Under the notation of
Definition~\ref{deffrmot} there is a natural stable local
equivalence of $S^1$-spectra $\mu:M_{fr}(\cc X)\to M_{fr}^{GL,n}(\cc
X)$, where $n\geq 0$. If $n$ is even and $\cc G\in\{SL,Sp,O,SO\}$
then there is also a natural stable local equivalence of
$S^1$-spectra $\mu:M_{fr}(\cc X)\to M_{fr}^{\cc G,n}(\cc X)$.
\end{thm}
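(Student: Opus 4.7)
The plan is to reduce the statement to Theorem~\ref{compmaps} via a direct identification of the $T$-spectrum $\cc Y$ of Definition~\ref{deffrmot} with a shift of the free $\cc G$-spectrum $F_n^{\cc G}(\cc X)$. First I construct $\mu$: the inclusion of the identity coset $S^0 \hookrightarrow (G_q/G_{q-n})_+$ induces compatible maps
$$\uhom_{\bb M_\bullet}(\bb P^{\wedge q}, \cc X \wedge T^q) \to \uhom_{\bb M_\bullet}(\bb P^{\wedge q}, \cc X \wedge (G_q/G_{q-n})_+ \wedge T^q),$$
which assemble in the colimit over $q \geq n$ to a natural map $C_*Fr(\cc X) \to C_*Fr^{\cc G, n}(\cc X)$. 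Applying this pointwise to $\cc X \wedge m_+$ for $m \in \Gamma^{\op}$ and passing to the associated Segal $S^1$-spectrum yields $\mu\colon M_{fr}(\cc X) \to M_{fr}^{\cc G, n}(\cc X)$.

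Next I would identify the $T$-spectrum $\cc Y$ with $T^n \wedge U(F_n^{\cc G}(\cc X))$, using the isomorphism $G_{q+} \wedge_{G_{q-n}} (\cc X \wedge T^q) \cong (G_q/G_{q-n})_+ \wedge \cc X \wedge T^q$ supplied by \cite[1.2]{GM} --- the very identification exploited in the proof of Theorem~\ref{compmaps}. Under this, $\mu$ arises from applying the colimit $\colim_q \uhom_{\bb M_\bullet}(\bb P^{\wedge q}, -)$ levelwise to the counit adjunction map
$$T^n \wedge F_n^{\cc N}(\cc X) \to T^n \wedge U(F_n^{\cc G}(\cc X)),$$
whose source is stably motivically equivalent to $\Sigma^\infty_T \cc X$. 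The proof of Theorem~\ref{compmaps} shows precisely that this counit is a stable motivic equivalence of ordinary $T$-spectra; the parity assumption $n \in 2\bb Z_{\geq 0}$ for $\cc G \in \{\textrm{SL}, \textrm{Sp}, \textrm{O}, \textrm{SO}\}$ is exactly what is needed to invoke the $T^2$-version of Theorem~\ref{compmaps} after the obvious reindexing.

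The main obstacle will be the final step: upgrading a stable motivic equivalence of $T$-spectra to a stable local equivalence of the $S^1$-spectra produced by the framed-motive construction. Since $\cc X \in \bl{\to}{\cc T}$ and each smooth scheme $G_q/G_{q-n}$ lies in $\cc T$, the terms of $\cc Y$ belong to $\bl{\to}{\cc T}$, so one is in position to apply the framed motives formalism of \cite{GP1}. The cancellation theorem, together with the identification of $M_{fr}(\cc X)$ with the locally correct $\bb P^1$-loop computation on $\bl{\to}{\cc T}$, should imply that the framed-motive functor carries stable motivic equivalences of term-wise-$\bl{\to}{\cc T}$ $T$-spectra to stable local equivalences of the associated $S^1$-spectra; verifying this implication in the present generality is the technical heart of the proof.
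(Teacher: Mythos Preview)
Your outline matches the paper's proof exactly up to and including the reduction to the counit map $\beta$ of Theorem~\ref{compmaps}; your construction of $\mu$ and identification of $\cc Y$ with $T^n\wedge U(F_n^{\cc G}(\cc X))$ are precisely what the paper does. Where you flag the ``technical heart'', the paper proceeds as follows. Both $T$-spectra in question are Thom spectra with bounding constant $d\leq 1$ in the sense of~\cite{GN}, and after passing to $\bb P^1$-spectra one applies~\cite[5.2]{GN} to conclude that the induced map $C_*Fr(\cc X\wedge T)\to C_*Fr^{\cc G,n}(\cc X\wedge T)$ is a \emph{local} equivalence of spaces --- note the detour through $\cc X\wedge T$ rather than $\cc X$ itself. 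Since these spaces underlie locally very special $\Gamma$-spaces, one obtains a level local equivalence $M_{fr}(\cc X\wedge T)\to M_{fr}^{\cc G,n}(\cc X\wedge T)$. The final step deloops: using~\cite[7.1]{GP1} for motivic fibrancy and the argument of~\cite[4.1(2)]{GP1} together with~\cite[Section~9]{GN}, both vertical maps in
\[
\xymatrix{M_{fr}(\cc X)_f\ar[r]\ar[d]&M_{fr}^{\cc G,n}(\cc X)_f\ar[d]\\
\Omega_{\bb P^1}(M_{fr}(\cc X\wedge T)_f)\ar[r]&\Omega_{\bb P^1}(M_{fr}^{\cc G,n}(\cc X\wedge T)_f)}
\]
are level equivalences (here $f$ denotes stable local fibrant replacement), whence so is the top arrow. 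Your proposed route via the cancellation theorem and a general ``framed motives preserve stable motivic equivalences of termwise-$\bl{\to}{\cc T}$ spectra'' principle is morally aligned, but the actual mechanism the paper uses is the Thom-spectrum machinery of~\cite{GN} rather than cancellation directly.
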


\begin{proof}
We shall prove the theorem for the case $\cc G=\{GL_n\}_{n\geq 0}$.
The proof for the other choices of $\cc G$ is similar. Without loss
of generality we may assume for simplicity $\cc X=X_+$, where $X\in
Sm/k$. By the proof of
Theorem~\ref{compmaps} there is a natural stable motivic equivalence
of $T$-spectra
   $$\beta:sh^{-n}(\Sigma_T^\infty (\cc X\wedge T^n))\to\cc Y,$$
where $sh^{-n}(\Sigma_T^\infty(\cc X\wedge
T^n))=(*,\bl{n-1}\ldots,*,\cc X\wedge T^n,\cc X\wedge
T^{n+1},\ldots)$ is the $(-n)$th shift of $\Sigma_T^\infty(\cc X\wedge T^n)$ and
$\cc Y$ as in Definition~\ref{deffrmot}. Observe that both spectra
are Thom spectra with the bounding constant $d\leq 1$ in the sense
of~\cite{GN}.

By the proof of~\cite[2.13]{Jar} $\beta$ is a stable motivic
equivalence of $\bb P^1$-spectra, and hence so is
   $$\Theta^\infty_{\bb P^1}(\beta):\Theta^\infty_{\bb P^1}(sh^{-n}(\Sigma_T^\infty (\cc X\wedge T^n)))
     \to\Theta^\infty_{\bb P^1}(\cc Y).$$
We have,
   $$C_*\Theta^\infty_{\bb P^1}(sh^{-n}(\Sigma_T^\infty (\cc X\wedge T^n)))
       =(C_*Fr(\cc X),C_*Fr(\cc X\wedge T),C_*Fr(\cc X\wedge T^2),\ldots)$$
and
   $$C_*\Theta^\infty_{\bb P^1}(\cc Y)
       =(C_*Fr^{GL,n}(\cc X),C_*Fr^{GL,n}(\cc X\wedge T),C_*Fr^{GL,n}(\cc X\wedge T^2),\ldots).$$
Since the map $C_*\Theta^\infty_{\bb P^1}(sh^{-n}(\Sigma_T^\infty
(\cc X\wedge T^n)))\to C_*\Theta^\infty_{\bb P^1}(\cc Y)$ is a
stable motivic equivalence, it follows from~\cite[5.2]{GN} that the
map of spaces
   $$\nu:C_*Fr(\cc X\wedge T)\to C_*Fr^{GL,n}(\cc X\wedge T)$$
is a local equivalence. By~\cite[A.1]{GNP} and the proof of~\cite[9.9]{GN} both
spaces are locally connected. It follows from~\cite[6.4]{GP1}
that these are the underlying spaces of
(locally) very special $\Gamma$-spaces, and so the map of
$S^1$-spectra
   $$\xi:M_{fr}(\cc X\wedge T)\to M_{fr}^{GL,n}(\cc X\wedge T)$$
is a level local equivalence.

Consider a commutative diagram
   $$\xymatrix{M_{fr}(\cc X)_f\ar[r]\ar[d]&M_{fr}^{GL,n}(\cc X)_f\ar[d]\\
               \Omega_{\bb P^1}(M_{fr}(\cc X\wedge T)_f)\ar[r]^{\xi_*}
               &\Omega_{\bb P^1}(M_{fr}^{GL,n}(\cc X\wedge T)_f).}$$
Here $f$ refers to the stable local fibrant replacement of
$S^1$-spectra and the upper arrow is induced by $\beta$. It follows
from~\cite[7.1]{GP1} that all spectra are motivically fibrant. Then
the map $\xi_*$ is a level weak equivalence of motivically fibrant
spectra. The proof of~\cite[4.1(2)]{GP1} shows that the vertical
arrows are level weak equivalences (we also
use~\cite[Section~9]{GN}), and hence so is the upper arrow. It
follows that the map
   $$M_{fr}(\cc X)\to M_{fr}^{GL,n}(\cc X)$$
is a stable local equivalence, as was to be shown.
\end{proof}

If $\cc X\mapsto\cc X^c$ is the cofibrant replacement functor in the
projective motivic model structure in $\bb M_\bullet$, then
$\cc X^c$ belongs to $\bl{\to}{\cc T}$ (see~\cite[Section~10]{GP1}).

\begin{thm}\label{prilozh}
Under the assumptions of Theorem~\ref{frmotgn} let $C_*\cc Fr^{\cc
G,n}$ be the functor on bispectra taking an $(S^1,\bb G_m^{\wedge
1})$-bispectrum $E$ to the bispectrum $C_*\cc Fr^{\cc G,n}(E)$ which
is defined in each bidegree as $C_*\cc Fr^{\cc
G,n}(E)_{i,j}:=C_*Fr^{\cc G,n}(E^c_{i,j})$, where $E^c$ is a
projective cofibrant resolution of $E$. Then $C_*\cc Fr^{\cc G,n}$
is an endofunctor on $SH_{\nis}(k)$ and is naturally isomorphic to the localizing functor
   $$C_*\cc Fr:SH_{\nis}(k)\to SH_{\nis}(k)$$
if $\cc G=\{GL_k\}_{k\geq 0}$ and $n$ is any non-negative integer,
or if $\cc G\in\{SL,Sp,O,SO\}$ and $n$ is even non-negative. In
particular, one has a localizing functor
   $$C_*\cc Fr^{\cc G,n}:SH_{\nis}(k)\to SH_{\nis}(k)$$
such that the category of $C_*\cc Fr^{\cc G,n}$-local objects is
$SH^{new}(k)$.
\end{thm}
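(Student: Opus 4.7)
The plan is to deduce Theorem \ref{prilozh} from Theorem \ref{frmotgn} applied bidegreewise to a projective cofibrant bispectrum $E^c$, using that the $\mu$'s of Theorem \ref{frmotgn} are natural in $\cc X$ and that a map of bispectra which is a stable local equivalence in each bidegree is a stable equivalence in $SH_{\nis}(k)$.

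First I would check that for a projective cofibrant $(S^1,\gmp)$-bispectrum $E^c$, each space $E^c_{i,j}$ lies in the class $\bl{\to}{\cc T}$ to which Theorem \ref{frmotgn} applies. The generating projective cofibrations of bispectra have domains and codomains obtained from pairs $(X,U)\in SmOp(Fr_0(k))$ smashed with standard simplicial cells, and these are sent by $spc$ into $\cc T$; stability of $\bl{\to}{\cc T}$ under the small object argument (filtered colimits and finite coproducts of pushouts along such cofibrations) gives the required conclusion. Under the constraints on $(\cc G,n)$ of the theorem (any $n$ for $\cc G=\{GL_k\}$; even $n$ for the other families) Theorem \ref{frmotgn} then yields, for every bidegree $(i,j)$, a natural stable local equivalence of $S^1$-spectra
\[
\mu_{i,j}:M_{fr}(E^c_{i,j})\longrightarrow M_{fr}^{\cc G,n}(E^c_{i,j}),
\]
whose underlying zeroth spaces provide a map $C_*Fr(E^c_{i,j})\to C_*Fr^{\cc G,n}(E^c_{i,j})$ which is a local weak equivalence (as in the proof of Theorem \ref{frmotgn} via \cite[5.2, 7.1]{GN,GP1} and the very special $\Gamma$-space property).

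Assembling these maps as $(i,j)$ varies gives a natural morphism of bispectra
\[
\mu_*:C_*\cc Fr(E)\longrightarrow C_*\cc Fr^{\cc G,n}(E),
\]
which is a stable local equivalence bidegreewise, hence a stable equivalence in $SH_{\nis}(k)$. Combined with the homotopy invariance of the construction under changes of projective cofibrant resolution (which follows because any weak equivalence $E^c\to E^{c\prime}$ of cofibrant resolutions induces a bidegreewise local equivalence on both $C_*\cc Fr(-)$ and $C_*\cc Fr^{\cc G,n}(-)$ by the same Theorem \ref{frmotgn} argument together with \cite[5.2]{GN}), this shows that $C_*\cc Fr^{\cc G,n}$ descends to an endofunctor of $SH_{\nis}(k)$ and that $\mu_*$ is a natural isomorphism $C_*\cc Fr\xrightarrow{\cong}C_*\cc Fr^{\cc G,n}$ there.

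The final assertion is immediate from the isomorphism just established: since the classes of $C_*\cc Fr$-local and $C_*\cc Fr^{\cc G,n}$-local objects coincide, and since by \cite[Section~2]{GP5} the category of $C_*\cc Fr$-local objects is $SH^{new}(k)$ canonically equivalent to $SH(k)$, the same holds for $C_*\cc Fr^{\cc G,n}$. The main obstacle I anticipate is the first step, namely verifying carefully that projective cofibrant bispectra have all levels in $\bl{\to}{\cc T}$ (so that Theorem \ref{frmotgn} is applicable at every bidegree and functorially in $(i,j)$), together with the bookkeeping required when $\cc G\in\{SL,Sp,O,SO\}$ to ensure the relevant shifts remain even so that the hypotheses of Theorem \ref{frmotgn} are satisfied uniformly in $(i,j)$.
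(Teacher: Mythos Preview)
Your proposal is correct and follows essentially the same approach as the paper: both deduce the result from Theorem~\ref{frmotgn} applied to the levels of a projective cofibrant resolution and then assemble to a comparison of bispectra. The only organizational difference is that the paper, for each $\gmp$-weight $j$, works with the $S^1$-spectrum $F=E^c_{*,j}$ via its layer filtration $F=\colim_m L_m(F)$, identifies $sh^m C_*Fr(L_m(F))\cong M_{fr}(F_m)$ (and similarly for the $(\cc G,n)$-version), and concludes that $C_*Fr(F)\to C_*Fr^{\cc G,n}(F)$ is a stable local equivalence of $S^1$-spectra; you instead argue bidegreewise, extracting from Theorem~\ref{frmotgn} and the very special $\Gamma$-space property a local equivalence of zeroth spaces $C_*Fr(E^c_{i,j})\to C_*Fr^{\cc G,n}(E^c_{i,j})$. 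Your route yields a nominally stronger (level-of-spaces) statement at the cost of invoking the very special property explicitly, while the paper's layer filtration avoids that extra appeal; in substance the two arguments are the same.
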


\begin{proof}
By the Additivity Theorem of~\cite{GP1} $C_*\cc Fr(-,Y)$ and $C_*\cc
Fr^{\cc G,n}(-,Y)$ are special $\Gamma$-spaces for $Y$ a filtered
colimit of simplicial schemes from $\Delta^{\op}Fr_0(k)$. Let $F$ be
an $S^1$-spectrum such that every entry $F_j$ of $F$ is a filtered
colimit of $k$-smooth simplicial schemes from $\Delta^{\op}Fr_0(k)$.
$F$ has a natural filtration $F=\colim_m L_m(F)$, where $L_m(F)$ is
the spectrum
   $$(F_{0},F_{1},\ldots,F_{m},F_{m}\wedge S^1,F_{m}\wedge S^2,\ldots).$$
Then $C_*Fr(F)=C_*Fr(\colim_m L_m(F))=\colim_m C_*Fr(L_m(F))$, where
$C_*Fr(L_m(F))$ is the spectrum
   $(C_*Fr(F_{0}),C_*Fr(F_{1}),\ldots,C_*Fr(F_{m}),C_*Fr(F_{m}\otimes S^1),C_*Fr(F_{m}\otimes S^2),\ldots).$
Similarly, one has $C_*Fr^{\cc G,n}(F)=C_*Fr^{\cc G,n}(\colim_m
L_m(F))=\colim_mC_*Fr^{\cc G,n}(L_m(F))$, where $C_*Fr^{\cc
G,n}(L_m(F))$ is the spectrum
   $$(C_*Fr^{\cc G,n}(F_{0}),C_*Fr^{\cc
      G,n}(F_{1}),\ldots,C_*Fr^{\cc G,n}(F_{m}),C_*Fr^{\cc
      G,n}(F_{m}\otimes S^1),C_*Fr^{\cc G,n}(F_{m}\otimes S^2),\ldots).$$
Observe that $sh^n C_*Fr(L_m(F))=M_{fr}(F_m)$ and $sh^n C_*Fr^{\cc
G,n}(L_m(F))=M_{fr}^{\cc G,n}(F_m)$.

By Theorem~\ref{frmotgn} the natural map $M_{fr}(F_m)\to M_{fr}^{\cc
G,n}(F_m)$ is a stable local equivalence, and hence so is
$C_*Fr(L_m(F))\to C_*Fr^{\cc G,n}(L_m(F))$. Thus the natural
map $C_*Fr(F)\to C_*Fr^{\cc G,n}(F)$ is a stable local
equivalence of spectra. Thus if $E$ is a bispectrum then the natural map of
bispectra $C_*\cc Fr(E)\to C_*\cc Fr^{\cc G,n}(E)$ is a level stable
local equivalence. The fact that $C_*\cc Fr^{\cc G,n}$
is an endofunctor on $SH_{\nis}(k)$ is obvious as well as that both functors are
isomorphic on $SH_{\nis}(k)$. This completes the proof.
\end{proof}




\begin{thebibliography}{99}

\bibitem{AJS} L. Alonso Tarr\'io, A. Jerem\'ias L\'opez, M. J. Souto Salorio,
              Localization in categories of complexes and unbounded resolutions,
              Canad. J. Math. 52(2) (2000), 225-247.
\bibitem{ADF} A. Asok, B. Doran, J. Fasel, Smooth models of motivic spheres and the clutching construction,
             Int. Math. Res. Notices 2017(6) (2017), 1890-1925.
\bibitem{AF1} A. Asok, J. Fasel, An explicit KO-degree map and applications, J. Topol. 10(1) (2017), 268-300.
\bibitem{AFW} A. Asok, J. Fasel, B. Williams, Motivic spheres and the image of Suslin's Hurewicz map,
             Invent. Math. 219 (2020), 39-73.
\bibitem{AHW2} A. Asok, M. Hoyois, M. Wendt, Affine representability results in
         $\bb A^1$-homotopy theory II: Principal bundles
         and homogeneous spaces, Geometry and Topology 22 (2018), 1181-1225.
\bibitem{AHW3} A. Asok, M. Hoyois, M. Wendt, Affine representability results in
            $\bb A^1$-homotopy theory III: finite fields and complements, Alg. Geom. 7(5) (2020), 634-644.
\bibitem{Bor} F. Borceux, Handbook of Categorical Algebra 2, Cambridge University Press,  Cambridge, 1994.
\bibitem{Con} B. Conrad, Reductive group schemes, available at
             math.stanford.edu/$\sim$conrad/papers/luminysga3smf.pdf, 2014.
\bibitem{Day} B. Day, On closed categories of functors, In Reports of the Midwest
             Category Seminar, IV, Springer, Berlin, 1970, pp. 1-38.
\bibitem{DRO} B. I. Dundas, O. R\"ondigs, P. A. {\O}stv{\ae}r, Enriched functors and stable
              homotopy theory, Doc. Math.  8 (2003), 409-488.
\bibitem{GN} G.~Garkusha, A.~Neshitov, Fibrant resolutions for motivic Thom spectra, arXiv:1804.07621.
\bibitem{GNP} G. Garkusha, A. Neshitov, I. Panin, Framed motives of relative motivic spheres, Trans. Amer. Math. Soc. 374(7) (2021), 5131-5161.
\bibitem{GP2} G. Garkusha, I. Panin, On the motivic spectral sequence, J. Inst. Math. Jussieu 17(1) (2018), 137-170.
\bibitem{GP1} G.~Garkusha, I.~Panin, Framed motives of algebraic varieties (after
            V. Voevodsky), J. Amer. Math. Soc. 34(1) (2021), 261-313.
\bibitem{GP5} G. Garkusha, I. Panin, The triangulated categories of framed bispectra and framed motives,
             arXiv:1809.08006.
\bibitem{GM} J. P. C. Greenlees, J. P. May, Equivariant stable homotopy theory, Handbook of Algebraic Topology,
            Amsterdam, 1995, pp. 277-323.
\bibitem{Hov} M. Hovey, Model categories, Mathematical Surveys and Monographs 63, 1999.
\bibitem{H} {M.~Hovey}, Spectra and symmetric spectra in general model categories,
             {J. Pure Appl. Algebra\/} 165(1) (2001), 63-127.
\bibitem{Is} D. Isaksen, Flasque model structures for simplicial presheaves, K-Theory 36 (2005), 371-395.
\bibitem{Jar} J.F. Jardine, Motivic symmetric spectra, Doc. Math. 5 (2000), 445-552.
\bibitem{MMSS} M. A. Mandell, J. P. May, S. Schwede, B. Shipley, Model categories of diagram spectra,
             Proc. London Math. Soc. 82(2) (2001), 441-512.
\bibitem{Mor} F. Morel, The stable $\bb A^1$-connectivity theorems, K-theory 35 (2006), 1-68.
\bibitem{MV} F. Morel, V. Voevodsky, $\bb A^1$-homotopy theory of schemes,
         Publ. Math. IHES 90 (1999), 45-143.
\bibitem{Ost} P. A. {\O}stv{\ae}r, Lectures on homotopy theory of schemes, Summer school ``Motives and Milnor conjecture",
          Paris, 2011.
\bibitem{PW} I. Panin, C. Walter, On the algebraic cobordism spectra $MSL$ and $MSp$, Algebra i Analiz
             34(1) (2022), 144-187.
\bibitem{RSO} O. R\"ondigs, M. Spitzweck, P.~A. \O stv\ae r, Motivic strict ring models for $K$-theory, Proc.
             Amer. Math. Soc. 138 (2010), 3509-3520.
\bibitem{Sch08} S. Schwede, On the homotopy groups of symmetric spectra, Geom. Topol. 12 (2008), 1313-1344.
\bibitem{Sch} S. Schwede, {An untitled book project about symmetric spectra}, (version April 2012).
\bibitem{SS} S. Schwede, B. Shipley, Algebras and modules in monoidal model categories,
             Proc. London Math. Soc. 80(3) (2000), 491-511.
\bibitem{Voe2} V.~Voevodsky, Notes on framed correspondences, unpublished, 2001.
         Also available at www.math.ias.edu/vladimir/publications

\end{thebibliography}
\end{document}